\newtheorem{theorem}{Theorem}[section]
\newtheorem{lemma}[theorem]{Lemma}
\newtheorem{proposition}[theorem]{Proposition}
\newtheorem{corollary}[theorem]{Corollary}
\theoremstyle{definition}
\newtheorem{remark}[theorem]{Remark}
\newtheorem{example}[theorem]{Example}
\numberwithin{equation}{section}
\newcommand{\Z}{\mathbb{Z}}
\newcommand{\C}{\mathbb{C}}
\newcommand{\Q}{\mathbb{Q}}
\title{On the splitting of surfaces in motivic stable homotopy category}
\author{Haoyang Liu}
\address{Department of Mathematics, University of California, Santa Barbara, CA, USA}
\email{haoyangliu@ucsb.edu}
\thanks{}
\begin{document}
\begin{abstract}
    Let $k$ be a perfect field and $X$ be a smooth projective surface over $k$ with a rational point, we discuss the condition of splitting 
    off the top cell for the motivic stable homotopy type of $X$. We also study some outlying examples, such as K3 surfaces. When $k$ is an algebraically field with characteristic not equal to 2, 
    we can give an alternative proof of the splitting result of curves and also understand the splittings of Calabi-Yau surfaces via the motivic Hurewicz theorem and decomposition of the Chow-Witt correspondences.
\end{abstract}
\maketitle
\tableofcontents

\section{Introduction}
Stable splitting is a significant phenomenon in the classical stable homotopy category $\text{SH}$ \cite{MR1867354}. Such splittings offer geometric interpretations of algebraic decompositions in homology and cohomology groups, as well as other algebraic invariants of topological spaces, such as Steenrod operations. In the motivic setting, analogous and equally intriguing examples exist. 
One notable instance is due to R\"ondigs \cite{10.1093/qmath/hap005}:

\begin{theorem}[R\"ondigs]\label{theta}
    Let $k$ be a field and $X$ be a smooth projective curve over $k$ with a rational point
    $x_0:S^{0,0}=\text{Spec}(k)_+\to X_+$. There is a splitting
    \begin{equation*}
        X_+\sim S^{0,0}\vee \mathbb{J}(X)\vee S^{2,1}
    \end{equation*}
    in the motivic stable homotopy category $\text{SH}(k)$ if and only if $X$ admits a theta characteristic.
\end{theorem}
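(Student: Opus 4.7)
The plan is to (i) reduce the splitting to a section-existence problem, (ii) apply Atiyah duality to reinterpret this as a Thom-spectrum question, and (iii) identify the resulting obstruction with the existence of a theta characteristic.

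For step (i), the rational point $x_0$ unconditionally gives a splitting off $S^{0,0}$: the structure map $X_+ \to S^{0,0}$ is a retraction of $x_0$, so one always has $X_+ \simeq S^{0,0} \vee \tilde{X}$ in $\text{SH}(k)$, where $\tilde{X} := X_+/x_0$. Since $x_0$ is a smooth rational point of the curve, collapsing the complement of a neighbourhood yields a map $\tilde{X} \to \mathrm{Th}(T_{x_0}X) \simeq S^{2,1}$, whose fibre defines $\mathbb{J}(X)$, giving a cofibre sequence $\mathbb{J}(X) \to \tilde X \to S^{2,1}$. The desired splitting is then equivalent to this sequence splitting, i.e., to the collapse admitting a stable section $S^{2,1} \to \tilde X$ in $\text{SH}(k)$.

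For step (ii), I would apply Atiyah duality $D(X_+) \simeq X^{-T_X}$ for the smooth projective curve. Under this duality, the collapse map dualises (after an appropriate shift) to a map into $X^{-T_X}$ whose source is a motivic sphere, and the section-existence problem translates into the existence of a specified stable class with prescribed behaviour at $x_0$. After a rank adjustment (using that $T_X$ is a line bundle, so $-T_X$ differs from $\omega_X$ by a virtual trivial bundle up to involution), this becomes a question about a class in the Thom spectrum $X^{\omega_X}$ that realises a ``square root'' of the Thom class of $\omega_X$.

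For step (iii), a theta characteristic $L$, with a chosen isomorphism $L^{\otimes 2} \cong \omega_X$, concretely produces the required class: the Thom class of $L$ in (Chow--Witt refined) motivic cohomology, multiplied with itself, yields the Thom class of $\omega_X$, and dualising gives the stable section. Conversely, from a splitting one extracts by duality a class whose square represents the Thom class of $\omega_X$, forcing $[\omega_X] \in \mathrm{Pic}(X)$ to be divisible by $2$ and producing a line bundle $L$ with $L^{\otimes 2} \cong \omega_X$. The hardest step will be (iii): making the extraction of a genuine line bundle from an abstract stable section rigorous requires careful bookkeeping of Thom classes under Atiyah duality and likely the use of a quadratic refinement of motivic cohomology (such as Chow--Witt groups or hermitian K-theory) that is sensitive to the ``square-root'' phenomenon, since the condition of being divisible by $2$ in $\mathrm{Pic}(X)$ is inherently a quadratic one, invisible to ordinary Chow groups modulo $2$ alone.
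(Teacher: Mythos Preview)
Your overall architecture (split off $S^{0,0}$, then use Atiyah duality to reduce the remaining $S^{2,1}$-splitting to a Thom-spectrum statement) matches the paper's, but the execution diverges in two places and one of them contains a real error.

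\textbf{Forward direction.} The paper (following R\"ondigs) does not try to build the section $S^{2,1}\to\tilde X$ by squaring a Thom class. Instead it uses a clean $K$-theory identity: a theta characteristic $L$ gives $[\mathcal T(x)]-1=[L]-[\hat L]$ in $K^0(X)$ (rank and $c_1$ both vanish, and for a curve that is enough). Combined with the facts that $\mathrm{Th}\colon K^0(X)\to\mathrm{Pic}(\mathrm{SH}(X))$ is a homomorphism and that $\mathrm{Th}(W)\simeq\mathrm{Th}(\hat W)$ for any bundle $W$, this yields $\mathrm{Th}(-[\mathcal T(x)])\simeq S^{-2,-1}$ in $\mathrm{SH}(X)$, hence $\mathcal D(X_+)\simeq S^{-2,-1}\wedge X_+$. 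Dualising the structure map then produces the map $S^{2,1}\to X_+$ directly, and Voevodsky's connectivity theorem ($[S^{0,0},S^{2,1}]=0$) checks that it really splits $(X,x_0)$. Your ``square the Thom class of $L$'' idea is morally the same phenomenon, but as written it only produces a cohomology class, not a map of spectra; the missing ingredient is precisely the $\mathrm{Th}(W)\simeq\mathrm{Th}(\hat W)$ lemma, which is what turns the square-root-of-$\omega_X$ intuition into an honest equivalence of Thom spectra.

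\textbf{Converse.} Here you have a genuine misconception. You write that divisibility of $[\omega_X]$ by $2$ in $\mathrm{Pic}(X)$ is ``invisible to ordinary Chow groups modulo $2$'' and therefore propose Chow--Witt or hermitian $K$-theory. But divisibility by $2$ in $\mathrm{Pic}(X)=CH^1(X)$ is \emph{exactly} the vanishing of the image in $CH^1(X)/2\cong H^{2,1}(X,\Z/2)$; nothing quadratic is needed. The paper's argument is: on $\mathcal D(X_+)\simeq x_\sharp\mathrm{Th}(-[\mathcal T(x)])$ one computes $Sq^2$ of the Thom class and gets $c_1(-[\mathcal T(x)])=-c_1(X)\bmod 2$ via the Thom isomorphism. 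If the splitting $\mathcal D(X_+)\simeq S^{0,0}\vee\mathcal D(F)\vee S^{-2,-1}$ holds, this $Sq^2$ is computed on the summand $S^{-2,-1}$, where it vanishes for degree reasons. Hence $c_1(X)\equiv 0\bmod 2$, i.e.\ a theta characteristic exists. So the obstruction is a single motivic Steenrod square, not a Grothendieck--Witt class.
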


We recall that a \emph{theta characteristic} of a smooth scheme $X$ is a line bundle $L\to X$ such that $L\otimes L\cong T_X$, where $T_X$ denotes the tangent bundle of $X$. In the context of Theorem \ref{theta}, it suffices to assume the existence of a rational point up to stable homotopy; that is, 
a section of the map $x_+:X_+\to S_+=S^{0,0}$ in the motivic stable homotopy category $\text{SH}(S)$. If such a section exists, the aforementioned splitting lifts to a splitting of the motive of 
$X$, where $\mathbb{J}(X)$ maps to the Jacobian variety of $X$, considered as a motive over the base field $k$. Moreover, the existence of a theta characteristic implies that $X$ admits an orientation.

The construction of the splitting in Theorem \ref{theta} relies on Spanier-Whitehead duality in $\text{SH}(k)$ as well as a connectivity theorem of Voevodsky (cf.\cite{10.1093/qmath/hap005}). As a result, the values of any (co)homology theory representable in $\text{SH}(k)$ decompose accordingly.
It is also natural to ask the possibility to extend the result in Theorem \ref{theta} to higher dimensional varieties, so in Section \ref{sectionsurface} we discuss the conditions of surfaces to have the same type of splitting, i.e. split out a top-dimensional cell. We also consider some outlying examples and observe some non-splitting phenomena.

Another direction to get splitting without localizations is to use the motivic Hurewicz theorem (\cite{Bachmann_20182} and \cite{asok20110thstablea1homotopysheaf}) to transport the integral decomposition of motives to a splitting of spectra in $\text{SH}(k)$. Over an algebraically closed field with characteristic not equal to 2, we can use this idea first prove Theorem \ref{theta} again, 
construct a splitting of Calabi-Yau surfaces with conditions and give criteria for lifting cycles in the Chow group of codimension 1 and 2 to corresponding Chow-Witt groups.

\subsection{Main theorem}

As mentioned in the abelian varieties case, if we want to split out the top-dimensional cell of a smooth projective variety over a field $k$ with a rational point up to stable homotopy, we need the tangent bundle of the variety to be 
stably trivial. In the curve case, this condition can be translated into the condition that the curve admits a theta characteristic or the tangent bundle admits a square root. This condition can also be viewed as this curve 
has an orientation. Based on this, one can try to translate this condition in higher dimensions and we get the following theorem:

\begin{theorem}\label{surface}
    For a smooth projective surface $X$ over a field $k$ satisfying $2c_2(X)=c_1^2(X)$ which admits a rational point, there is a splitting
    \begin{equation*}
        X_+\sim S^{0,0}\vee F\vee S^{4,2}
    \end{equation*}
    in the motivic stable homotopy category $\text{SH}(k)$ if and only if $c_1(X)\equiv 0\mod 2$.
\end{theorem}

We also discuss about viewing the splitting via some thoughts of Chow-Witt correspondences. Over an algebraically closed field $k$ having characteristic unequal to 2, we can first use the diagram \ref{diagramofchowwitt} to understand the condition for when a cycle in codimension 2 can be realized from a cycle in the Chow-Witt group. And this will lead us to the splitting of smooth proper surfaces with trivial canonical bundle, i.e. the Calabi-Yau surfaces.

\begin{theorem}
    If $X$ is a Calabi-Yau surface over an algebraically closed field $k$ having characteristic unequal to 2 and $H_{\text{\'et}}^1(X\times X,\Z/2)=0$,
    then there is a splitting
    \begin{equation*}
        X_+\sim S^{0,0}\vee F\vee S^{4,2}
    \end{equation*}
    in the motivic stable homotopy category $\text{SH}(k)$.
\end{theorem}

Then we try to apply this result to discuss the case of principally polarized abelian surfaces. As the base field is algebraically closed, here we invoke the criterion of Matsusaka-Ran \cite{d00284d3-087e-3a08-8909-ade837d4b99e} to reduce the problem to the case of the Jacobian varieties of a genus 2 curve.
We also continue the discussion to the case $k=\C$ as there are some classification results of complex Jacobians based on the automorphism group of the curve (see Remark \ref{complexjacobian}). 

\subsection{Outline of the paper}

In Section \ref{sectionsurface}, we will see the same type of splitting as Theorem \ref{theta} can also be achieved for smooth projective surfaces over $k$ under some conditions. 

In Section \ref{sectionK3}, we also discuss some cases that are not included in the above where some ``non-splitting" phenomena will appear, such as K3 surfaces.

In Section \ref{sectionK}, we first recollect some results on Chow-Witt group and a version of the motivic Hurewicz theorem. And we will use the key diagrams of Chow-Witt group to analyze the splitting conditions for curves and Calabi-Yau surfaces over an algebraically closed field with characteristic not equal to 2 respectively. 

\subsection{Acknowledgements}
The author wants to thank Aravind Asok for his helpful advices and suggestions for a draft of this paper. The author also wants to express his gratitude to Fangzhou Jin for hosting
him at Tongji University.

\section{Stable splitting of surfaces}\label{sectionsurface}
In this section we will first revisit some preliminaries of motivic stable homotopy category and then explain how to construct the splitting of a smooth projective surface over a field $k$.

Let the \emph{base scheme} be a Noetherian scheme of finite Krull dimension. Denote by $\text{SH}(S)$ the motivic stable homotopy category of the base 
scheme $S$, which is the homotopy category of a model category of motivic spectra over $S$. A \emph{motivic space} $A$ over $S$ is a presheaf on the site $\text{Sm}_S$ of smooth separated $S$-schemes, taking values in the category 
of simplicial sets. A \emph{motivic spectrum} $E$ over $S$ consists of a sequence $(E_0,E_1,...)$ of pointed motivic spaces over $S$, together with 
a sequence of structure map $\sigma^{E}_{n}:E_n\wedge S^{2,1}\to E_{n+1}$. Here the smash product of pointed simplicial presheaves is defined 
sectionwise, and $S^{2,1}=\mathbb{A}^1_S/\mathbb{A}^1_S\setminus\{ 0\}$, which denotes the Thom space of the trivial line bundle over $S$. A smooth $S$-scheme $x:X\to S$ defines a representable pointed (simplicial) presheaf by adding a disjoint base point. 
Let $\Sigma^{\infty, \infty} X_+$ (or simply $X_+$) denote the associated $S^{2,1}$-suspension spectrum. Its $n$-th structure map is the identity on $X_+\wedge S^{2n,n}$, where $S^{2n,n}=S^{2(n-1),n-1}\wedge S^{2,1}$.

The category $\text{SH}(S)$ is closed symmetric monoidal under the smash product $E\wedge F$, with unit object $\mathbb{I}_S:=S_+=S^{0,0}$.

If $f:S\to S'$ is a morphism of base schemes, there is an adjoint pair
\begin{equation*}
    f_*:\text{SH}(S)\rightleftharpoons\text{SH}(S'):f^*.
\end{equation*}
If the morphism $f$ is smooth, $f^*$ has a left adjoint $f_\sharp:\text{SH}(S)\to\text{SH}(S')$, and the projection formula holds.

Let $p:V\to S$ be a vector bundle over $S$, with zero section $z:S\to V$. Let $\text{Th}(V)$ denote the $S^{2,1}$-suspension spectrum of the pointed (simplicial)
presheaf that sends $U\to S\in\text{Sm}_S$ to the quotient set $\text{Hom}_{\text{Sm}_S}(U,V)/Hom_{\text{Sm}_S}(U,V\setminus z(S))$. This is called the \emph{Thom spectrum}
of $p:V\to S$. In the special case where $V\cong\mathbb{A}^n\to S$ is a trivial vector bundle, the Thom spectrum $\text{Th}(\mathbb{A}^n)$ is the $n$-fold smash product of the 
$S^{2,1}$-suspension spectrum of $S^{2,1}$ itself. Let $S^{1,0}$ be the $S^{2,1}$-suspension spectrum of the constant pointed simplicial preshaef sending every $U\to S\in\text{Sm}_S$
to $\Delta^1/\partial\Delta^1$. The relation $S^{2,1}\simeq S^{1,0}\wedge (\mathbb{A}^1_S\setminus \{ 0\}, 1)$ shows that $S^{1,0}$ is invertible under the smash product as well.
Denote $S^{1,1}=(\mathbb{A}^1\setminus \{0\}, 1)$. Thus, for each pair $(p,q)$ of integers there is a bigraded motivic sphere
\begin{equation*}
    S^{p,q}:= S^{p-2q,0}\wedge S^{2q,q}\in\text{SH}(S),
\end{equation*}
which is invertible with respect to the smash product.

The construction of the splitting requires a special case of a connectivity theorem, due to Voevodsky\cite{10.1093/qmath/hap005}.

\begin{theorem}\label{Conn}
    Let $S=\text{Spec}(k)$ be the spectrum of a field and $p,q\in\mathbb{Z}$. Then
    \begin{equation*}
        \text{Hom}_{\text{SH}(S)}(S^{0,0}, S^{p,q})=0
    \end{equation*}
    whenever $p>q$.
\end{theorem}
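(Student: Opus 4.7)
The plan is to reduce the statement to the connectivity of the motivic sphere spectrum and then invoke the stable $\mathbb{A}^1$-connectivity theorem. The first step exploits the invertibility of the bigraded spheres. For any invertible object $F\in\text{SH}(k)$, smashing with $F$ gives a natural bijection $[A,B]\cong[A\wedge F,B\wedge F]$. Taking $F=S^{-p,-q}$ gives
\[
    \text{Hom}_{\text{SH}(k)}(S^{0,0},S^{p,q})\;\cong\;\text{Hom}_{\text{SH}(k)}(S^{-p,-q},S^{0,0})\;=\;\pi_{-p,-q}(\mathbb{I}_S),
\]
where $\pi_{a,b}(E):=[S^{a,b},E]$. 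Since the simplicial dimension of $S^{a,b}$ is $a-b$ (with $a-b$ copies of $S^{1,0}$ and $b$ copies of $S^{1,1}$ in its decomposition), the hypothesis $p>q$ translates to $a-b<0$ for $(a,b)=(-p,-q)$. Thus the theorem is equivalent to the assertion that $\pi_{a,b}(\mathbb{I}_S)=0$ whenever $a-b<0$, i.e., that the sphere spectrum is connective in the homotopy $t$-structure on $\text{SH}(k)$.

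Next I would install the homotopy $t$-structure: declare $E\in\text{SH}(k)_{\ge 0}$ iff for every $n<0$ and every $i\in\mathbb{Z}$ the Nisnevich sheaf on $\text{Sm}_k$ associated to $U\mapsto[\Sigma^{\infty,\infty}U_+\wedge S^{n+i,i},E]$ vanishes. The desired vanishing is then exactly $\mathbb{I}_S\in\text{SH}(k)_{\ge 0}$. Since this subcategory is stable under colimits and extensions, it suffices to prove the stronger statement that $\Sigma^{\infty,\infty}X_+\in\text{SH}(k)_{\ge 0}$ for every smooth $k$-scheme $X$; the sphere is the case $X=\text{Spec}(k)$.

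The essential and hardest step is verifying this connectivity of $\Sigma^{\infty,\infty}X_+$. The naive simplicial connectivity of $X_+$ is immediate, but one must rule out hidden classes arising from $\mathbb{G}_m$-desuspensions. The route I would follow uses the strict $\mathbb{A}^1$-invariance theorem, which guarantees that the homotopy sheaves of a connective motivic spectrum are strictly $\mathbb{A}^1$-invariant. Combined with the bound $H^i_{\mathrm{Nis}}(X,\mathcal{F})=0$ for $i>\dim X$ on a smooth $X$ and an analysis of the hyperdescent/Postnikov spectral sequence, this yields the required vanishing range and forces $\underline{\pi}_{n+i,i}(\Sigma^{\infty,\infty}X_+)=0$ for $n<0$. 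Strict $\mathbb{A}^1$-invariance is the genuine obstacle: it is not a formal consequence of $\mathbb{A}^1$-homotopy invariance and requires a geometric input (Gabber's presentation lemma, or equivalently control of the Rost--Schmid complex). Once in hand, unwinding the reformulation from the first paragraph delivers the theorem.
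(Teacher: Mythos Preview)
The paper does not actually prove this theorem: it is stated as a black-box input, attributed to Voevodsky and cited through R\"ondigs' paper. There is therefore no proof in the paper to compare against.

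Your outline is the standard argument and is correct in substance. The reduction $[S^{0,0},S^{p,q}]\cong[S^{-p,-q},\mathbb{I}_S]$ is immediate from invertibility, and the vanishing for $(-p)-(-q)<0$ is precisely the statement that the motivic sphere spectrum lies in the nonnegative part of the homotopy $t$-structure on $\text{SH}(k)$. That connectivity is Morel's stable $\mathbb{A}^1$-connectivity theorem (sometimes attributed in part to Voevodsky for the unstable precursor), and you have correctly identified its genuine content: strict $\mathbb{A}^1$-invariance of the homotopy sheaves, whose proof over a field rests on Gabber's geometric presentation lemma. One small remark: in passing from vanishing of the Nisnevich sheaf $\underline{\pi}_{a,b}(\mathbb{I}_S)$ to vanishing of the group $[S^{a,b},\mathbb{I}_S]$ you are implicitly using that $\text{Spec}(k)$ has no nontrivial Nisnevich covers, so the presheaf and sheaf values agree there; this is harmless but worth making explicit.
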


Next, we describe the Spanier-Whitehead dual of a smooth projective schemes $x:X\to S$ with tangent bundle 
$\mathcal{T}(x)\to X$. Let $\mathbb{I}_S\in\text{SH}(S)$ denote the unit for the smash product in the motivic stable
homotopy category, which is given by the $S^{2,1}$-suspension spectrum of the zero sphere $S_+$. The \emph{Spanier-Whitehead dual}
of $E\in\text{SH}(S)$ is the internal hom $\mathcal{D}(E):=\text{SH}(S)(E,\mathbb{I}_S)$. For example, the Spanier-Whitehead dual of
$S^{p,q}$ is $S^{-p,-q}$. The following result concerning Spanier-Whitehead duality was proven in \cite{HU2005609} for the case $S=\text{Spec}(k)$. The general 
case is treated in \cite{AST_2007__314__R1_0}.

\begin{theorem}\label{Dual}
    Let $S$ be a base scheme, and let $x:X\to S$ be a smooth projective morphism. There is an isomorphism
    \begin{equation*}
        \mathcal{D}(X_+)\sim x_{\sharp}(\text{Th}(-[\mathcal{T}(x)]))
    \end{equation*}
    in $\text{SH}(S)$, where $[\mathcal{T}(x)]\in K^0(X)$ is the class of the tangent bundle of $x:X\to S$.
\end{theorem}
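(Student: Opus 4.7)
The plan is to derive Atiyah duality from the six-functor formalism on $\text{SH}(-)$, whose key ingredients are the adjoint triple $(x_\sharp, x^*, x_*)$ for smooth $x$, the adjoint pair $(x_!, x^!)$ in general, proper base change ($x_! \simeq x_*$ for $x$ proper), and relative purity ($x^! \simeq \text{Th}(\mathcal{T}(x)) \wedge x^*(-)$ for $x$ smooth). With this package available, the identification of $\mathcal{D}(X_+)$ is essentially formal.

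First I would rewrite $X_+ \simeq x_\sharp \mathbb{I}_X$, using that $\mathbb{I}_X = x^* \mathbb{I}_S$ and that $x_\sharp x^* \mathbb{I}_S$ is precisely the representable presheaf of $X$ with a disjoint basepoint. Combining the adjunctions $x_\sharp \dashv x^* \dashv x_*$ with the smash--internal-hom adjunction yields the formal identity
\begin{equation*}
\underline{\text{Hom}}(x_\sharp A, B) \simeq x_* \underline{\text{Hom}}(A, x^* B),
\end{equation*}
proved by Yoneda via the projection formula $x_\sharp(A \wedge x^* C) \simeq x_\sharp A \wedge C$. Specializing to $A = \mathbb{I}_X$ and $B = \mathbb{I}_S$ collapses the right-hand side to $x_* \mathbb{I}_X$, so $\mathcal{D}(X_+) \simeq x_* \mathbb{I}_X$.

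Next I would invoke the smooth-and-proper hypotheses. Properness gives $x_* \simeq x_!$, while the adjoint chase
\begin{equation*}
\text{Hom}(x_! A, B) \simeq \text{Hom}(A, \text{Th}(\mathcal{T}(x)) \wedge x^* B) \simeq \text{Hom}(x_\sharp(\text{Th}(-\mathcal{T}(x)) \wedge A), B)
\end{equation*}
using relative purity yields a natural isomorphism $x_!(-) \simeq x_\sharp(\text{Th}(-\mathcal{T}(x)) \wedge -)$ for smooth $x$. Evaluating at $\mathbb{I}_X$ produces $x_! \mathbb{I}_X \simeq x_\sharp \text{Th}(-\mathcal{T}(x))$, and composing the three isomorphisms concludes.

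The main obstacle is supplying the six-functor formalism itself, and above all relative purity, which is the geometric heart of the argument. The cited references take two routes. For $S = \text{Spec}(k)$, Hu constructs the evaluation and coevaluation of the duality pairing by hand: fix a closed embedding $i: X \hookrightarrow \mathbb{P}^N_S$ with normal bundle $\nu$ so that $[\nu] + [\mathcal{T}(x)]$ is pulled back from $\mathbb{P}^N_S$, extract a Pontryagin--Thom collapse from Morel--Voevodsky homotopy purity, and verify the triangle identities directly. For a general base, Ayoub's framework packages proper base change and relative purity into an axiomatic formalism, after which the proof is exactly the formal one sketched above.
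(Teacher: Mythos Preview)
The paper does not prove this theorem at all: it is stated with the remark that the case $S=\operatorname{Spec}(k)$ is due to Hu and the general case to Ayoub, and is then used as a black box. Your proposal is a correct sketch of the argument precisely along the lines of those two references. The formal derivation via $x_\sharp\dashv x^*\dashv x_*$, the identification $\mathcal{D}(X_+)\simeq x_*\mathbb{I}_X$, the replacement $x_*\simeq x_!$ for proper $x$, and the purity identity $x_!\simeq x_\sharp(\mathrm{Th}(-\mathcal{T}(x))\wedge -)$ is exactly how Atiyah duality falls out of Ayoub's six-functor formalism; your description of Hu's direct construction of the duality pairing via an embedding into projective space and a Pontryagin--Thom collapse is likewise accurate. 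So there is nothing substantive to contrast with the paper's own argument, and your sketch is faithful to the sources the paper invokes.
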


If $A$ is a retract of $B$ in a stable homotopy category like $\text{SH}(S)$, then $A$ is in fact a direct summand of $B$. Given an object $X\in\text{Sm}_S$
with the structural map $x:X\to S$, observe that $S^{0,0}=S^+$ is a retract of $X_+$ in $\text{SH}(S)$ if there exists a morphism $x_0:S^{0,0}=S_+\to X_+$
in $\text{SH}(S)$ such that $x_+\circ x_0$ is the identity element in $\pi_{0,0}(S^{0,0})=\text{Hom}_{\text{SH}(S)}(S^{0,0}, S^{0,0})$. Such a morphism
$x_0$ will be referred to as a \emph{rational point up to stable homotopy}. Every rational point is also a rational point up to stable homotopy.

\begin{theorem}\label{Shift1}
    Let $x:X\to S$ be a smooth projective connected scheme over $S=\text{Spec}(k)$ of dimension $d$ with a rational point up to stable
    homotopy $x_0:S^{0,0}\to X_+$. Suppose that $x_{\sharp}(\text{Th}(-[\mathcal{T}(x)]))$ is isomorphic to $S^{-2d,-d}\wedge X_+\in\text{SH}(S)$.
    Then $X_+$ splits as
    \begin{equation*}
        X_+\sim S^{0,0}\vee F\vee S^{2d,d}
    \end{equation*}
    for some $F\in\text{SH}(S)$.
\end{theorem}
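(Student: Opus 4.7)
The plan is to produce two independent summand decompositions of $X_+$ and then splice them together, using the connectivity theorem to kill a potentially obstructing cross-term.

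First I would exploit the rational point up to stable homotopy directly: since $x_+ \circ x_0 = \mathrm{id}_{S^{0,0}}$, the sphere $S^{0,0}$ is a retract and therefore a direct summand of $X_+$, so we can write $X_+ \simeq S^{0,0} \vee Y$ for some $Y \in \text{SH}(S)$. Next I would use the duality input. By Theorem \ref{Dual} and the hypothesis, $\mathcal{D}(X_+) \simeq x_\sharp(\text{Th}(-[\mathcal{T}(x)])) \simeq S^{-2d,-d} \wedge X_+$. Smashing with the invertible sphere $S^{2d,d}$ gives $X_+ \simeq S^{2d,d} \wedge \mathcal{D}(X_+)$. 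Dualizing the first decomposition yields $\mathcal{D}(X_+) \simeq S^{0,0} \vee \mathcal{D}(Y)$, and smashing back with $S^{2d,d}$ produces $X_+ \simeq S^{2d,d} \vee \bigl(S^{2d,d} \wedge \mathcal{D}(Y)\bigr)$. In particular there exist maps $s: S^{2d,d} \to X_+$ and $r: X_+ \to S^{2d,d}$ with $r \circ s = \mathrm{id}_{S^{2d,d}}$.

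At this point $S^{0,0}$ and $S^{2d,d}$ are each known to be summands of $X_+$, but not a priori \emph{jointly}. To join them, I would consider the candidate section $(x_0, s): S^{0,0} \vee S^{2d,d} \to X_+$ together with the candidate retraction $(x_+, r): X_+ \to S^{0,0} \vee S^{2d,d}$, and write the composite as the $2 \times 2$ matrix
\begin{equation*}
\begin{pmatrix} x_+ \circ x_0 & x_+ \circ s \\ r \circ x_0 & r \circ s \end{pmatrix}
= \begin{pmatrix} \mathrm{id}_{S^{0,0}} & x_+ \circ s \\ r \circ x_0 & \mathrm{id}_{S^{2d,d}} \end{pmatrix}.
\end{equation*}
The lower-left entry $r \circ x_0$ lies in $\text{Hom}_{\text{SH}(S)}(S^{0,0}, S^{2d,d})$, which vanishes by the connectivity theorem (Theorem \ref{Conn}) as soon as $2d > d$, i.e.\ $d \geq 1$. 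So the matrix is upper-triangular with identity entries on the diagonal, hence an isomorphism in $\text{SH}(S)$. This identifies $S^{0,0} \vee S^{2d,d}$ as a joint summand of $X_+$, yielding $X_+ \simeq S^{0,0} \vee F \vee S^{2d,d}$ with $F$ the complementary summand.

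The only place where anything nontrivial happens is the matrix argument in the last step, and the key obstacle there is precisely the potential non-vanishing of the "cross-term" $r \circ x_0: S^{0,0} \to S^{2d,d}$; it is the Voevodsky connectivity theorem that removes this obstacle. The other cross-term $x_+ \circ s \in \text{Hom}(S^{2d,d}, S^{0,0})$ need not vanish, but we do not need it to — it only affects the form of the resulting triangular isomorphism, not its invertibility. (The edge case $d=0$ is trivial since then $X \cong \text{Spec}(k)$.)
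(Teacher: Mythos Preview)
Your proof is correct and follows essentially the same strategy as the paper: split off $S^{0,0}$ via the rational point, split off $S^{2d,d}$ via duality and the hypothesis, and use Voevodsky's connectivity theorem $[S^{0,0},S^{2d,d}]=0$ to merge the two retracts. The only cosmetic difference is in the last step: the paper shows directly that $S^{2d,d}$ is a retract of the complement $(X,x_0)$ by arguing that $d^*\colon [X_+,S^{2d,d}]\to[(X,x_0),S^{2d,d}]$ is an isomorphism (since the other summand $[S^{0,0},S^{2d,d}]$ vanishes), whereas you encode the same vanishing as the lower-left entry of your $2\times 2$ matrix, yielding an upper-triangular unipotent and hence invertible self-map of $S^{0,0}\vee S^{2d,d}$.
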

\begin{proof}
    The rational point up to stable homotopy $x_0$, together with the structure map $x$ implies that $S^{0,0}$ is a retract of $X_+$. As mentioned above, there exists a splitting 
    $X_+\stackrel{\sim}{\longrightarrow}(X,x_0)\vee S^{0,0}$ in $\text{SH}(S)$ given by a morphism $c:X_+\to (X,x_0)$ and the structural map $x_+$. Let $d:(X,x_0)\to(X,x_0)\vee S_+\stackrel{\sim}{\longrightarrow}X_+$
    denote the canonical map. To produce the splitting, it suffices to show that $S^{2d,d}$ is a retract of $(X,x_0)$.\\
    Applying the Spanier-Whitehead duality functor to the morphisms $S^{0,0}\stackrel{x_0}{\longrightarrow}X_+\stackrel{x_+}{\longrightarrow}S^{0,0}$ produces morphisms
    \begin{equation*}
        S^{0,0}\sim\mathcal{D}(S^{0,0})\stackrel{\mathcal{D}(x_0)}{\gets}\mathcal{D}(X_+)\stackrel{\mathcal{D}(x_+)}{\gets}\mathcal{D}(S^{0,0})\sim S^{0,0}
    \end{equation*}
    Then, by Theorem \ref{Dual} and the given condition, we have the isomorphism $\mathcal{D}(X_+)\sim S^{-2d,-d}\wedge X_+$. Tensoring both sides with $S^{2d,d}$ we obtain the diagram
    \begin{equation*}
        S^{2d,d}\stackrel{\varphi}{\longleftarrow}X_+\stackrel{\psi}{\longleftarrow}S^{2d,d}
    \end{equation*}
    which shows that $S^{2d,d}$ is a retract of $X_+$. To obtain the desired result, we must show that the composition
    \begin{equation*}
        S^{2d,d}\stackrel{\varphi}{\longleftarrow}X_+\stackrel{d}{\longleftarrow}(X,x_0)\stackrel{c}{\longleftarrow}X_+\stackrel{\psi}{\longleftarrow}S^{2d,d}
    \end{equation*}
    is the identity. This composition corresponds to the image of $\text{id}_{S^{2d,d}}$ under the sequence of maps:
    \begin{equation*}
        [S^{2d,d}, S^{2d,d}]\stackrel{\varphi^*}{\longrightarrow}[X_+,S^{2d,d}]\stackrel{d^*}{\longrightarrow}[(X,x_0), S^{2d,d}]\stackrel{c^*}{\longrightarrow}[X_+,S^{2d,d}]\stackrel{\psi^*}{\longrightarrow}[S^{2d,d}, S^{2d,d}],
    \end{equation*}
    where $\text{Hom}_{\text{SH}(S)}(-,-)=[-,-]$ denotes morphisms in the homotopy category. The splitting $X_+\sim S^{0,0}\vee (X,x_0)$ implies that the following diagrams commute:\\
    \begin{equation*}
     \begin{tikzcd}
        \lbrack X_+,S^{2d,d} \rbrack\arrow[r,"\cong"] \arrow[d,"d^*"] & \lbrack S^{0,0}\vee (X,x_0), S^{2d,d}\rbrack \arrow[d,"\cong"]\\
        \lbrack (X,x_0), S^{2d,d} \rbrack & \lbrack S^{0,0},S^{2d,d} \rbrack \oplus \lbrack (X,x_0), S^{2d,d}\rbrack \arrow[l,"\text{pr}"]
     \end{tikzcd}
    \end{equation*}
    By Theorem \ref{Conn}, the group $[S^{0,0}, S^{2d,d}]$ is trivial. This shows that the map $d^*$ is an isomorphism with inverse $c^*$.
\end{proof}

With the above theorem, we can determine the condition of a smooth projective surface being able to split off $S^{4,2}$ in its motivic stable homotopy type. 

\begin{theorem}\label{0.1}
    For a smooth projective surface $X$ over a field $k$ which admits a rational point, if $c_1(X)\equiv 0\mod 2$ and $2c_2(X)= c_1^2(X)$, then there is a splitting
    \begin{equation*}
        X_+\sim S^{0,0}\vee F\vee S^{4,2}
    \end{equation*}
    in the motivic stable homotopy category $\text{SH}(k)$.
\end{theorem}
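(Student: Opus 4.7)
The plan is to invoke Theorem~\ref{Shift1} with $d=2$, which reduces the whole problem to verifying the single hypothesis
\[
x_\sharp\bigl(\text{Th}(-[\mathcal{T}(X)])\bigr)\sim S^{-4,-2}\wedge X_+\quad\text{in }\text{SH}(k).
\]
Once this isomorphism is in hand, Theorem~\ref{Shift1} automatically delivers the splitting $X_+\sim S^{0,0}\vee F\vee S^{4,2}$; the rational point required by Theorem~\ref{Shift1} is already part of the hypothesis, so no further work is needed on that front.

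To construct the Thom spectrum equivalence, I would first use the hypothesis $c_1(X)\equiv 0\mod 2$ to select a line bundle $L\to X$ with $2c_1(L)=c_1(X)$; this $L$ plays the role of an algebraic theta characteristic for the surface $X$, in parallel with R\"ondigs' setup for curves in Theorem~\ref{theta}. Form the virtual class $\tau:=[\mathcal{T}(X)]-2[L]\in K^0(X)$. By construction $\tau$ has rank zero and trivial $c_1$, so it lies in the second step $F^2K^0(X)$ of the $\gamma$-filtration, which for a surface is concentrated in $F^2/F^3\cong\text{CH}^2(X)$. The numerical hypothesis $2c_2(X)=c_1(X)^2$ should be leveraged here to pin down the residual class $c_2(\tau)\in\text{CH}^2(X)$, isolating exactly the piece of $\tau$ that could obstruct the Thom spectrum equivalence.

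Next, I would show that the contribution of $\tau$ is killed after applying $\text{Th}(-)$ and pushing forward along $x_\sharp$, yielding
\[
x_\sharp\text{Th}(-[\mathcal{T}(X)])\sim x_\sharp\text{Th}(-2[L]).
\]
The right-hand side can then be rewritten as $x_\sharp\bigl(\text{Th}(-L)^{\wedge_X 2}\bigr)$, and the theta-characteristic relation $L^{\otimes 2}\cong\det\mathcal{T}(X)$ allows an identification with $S^{-4,-2}\wedge X_+$ via essentially the same Spanier-Whitehead and connectivity manipulations R\"ondigs uses in the curve case. Voevodsky's connectivity result (Theorem~\ref{Conn}) will be invoked to kill any obstruction living in $\text{Hom}_{\text{SH}(k)}(S^{0,0},S^{p,q})$ with $p>q$, exactly as in the proof of Theorem~\ref{Shift1}.

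The main obstacle I anticipate is precisely the middle step. A direct $K^0(X)$-level equality $[\mathcal{T}(X)]=2[L]$ is strictly stronger than what the Chern hypotheses supply: one computes $c_2(L\oplus L)=c_1(L)^2$, whereas $c_2(\mathcal{T}(X))=c_1(X)^2/2=2c_1(L)^2$, so $\tau$ is in general nonzero in $K^0(X)$. The substance of the proof must therefore take place at the level of Thom spectra after $x_\sharp$, where the extra equivalences available in $\text{SH}(k)$ (as opposed to $\text{SH}(X)$) destroy some information about the $X$-valued twist. Identifying the correct vanishing statement --- presumably phrased in terms of the $\text{SL}$-orientation furnished by the theta characteristic, together with Voevodsky connectivity to absorb higher-weight corrections --- is the crux of the argument and the analogue, one dimension up, of the orientation mechanism that makes R\"ondigs' curve theorem work.
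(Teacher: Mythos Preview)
Your overall strategy---reduce everything to the hypothesis of Theorem~\ref{Shift1} by producing an equivalence $\text{Th}(-[\mathcal{T}(X)])\sim S^{-4,-2}$---is exactly right, and your choice of a line bundle $L$ with $2c_1(L)=c_1(X)$ is also the correct first move. The gap is in the \emph{form} of the comparison class. You compare $[\mathcal{T}(X)]$ to $2[L]$, obtain a nonzero $\tau=[\mathcal{T}(X)]-2[L]\in K^0(X)$ with $c_2(\tau)=c_1(L)^2$, and then hope that $x_\sharp$ annihilates its Thom spectrum. There is no mechanism for that last step, and in fact your subsidiary claim that $x_\sharp\text{Th}(-2[L])\sim S^{-4,-2}\wedge X_+$ has the same problem: using $\text{Th}(L)\sim\text{Th}(L^\vee)$ one finds $\text{Th}(2[L])\sim\text{Th}(L\oplus L^\vee)$, and $L\oplus L^\vee$ still has $c_2=-c_1(L)^2\neq 0$, so this is not trivial in $\text{Pic}(\text{SH}(X))$ either. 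The obstruction you flagged does not disappear after $x_\sharp$; it was introduced by the wrong ansatz.

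The fix, which is what the paper does, is to compare $[\mathcal{T}(X)]$ not to $2[L]$ but to $[\mathcal{O}_X^2]+[L]-[L^\vee]$. Both have rank $2$ and $c_1=2c_1(L)=c_1(X)$, but now the second Chern classes match as well: $c\bigl([L]-[L^\vee]\bigr)=(1+D)/(1-D)=1+2D+2D^2$ with $D=c_1(L)$, so $c_2=2D^2=\tfrac{1}{2}c_1(X)^2=c_2(X)$ by hypothesis. Since on a surface a class in $K^0(X)$ with vanishing rank, $c_1$, and $c_2$ is zero (Fulton, Example~15.3.6), one gets the honest equality $[\mathcal{T}(X)]-[\mathcal{O}_X^2]=[L]-[L^\vee]$ in $K^0(X)$. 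Now R\"ondigs' identity $\text{Th}(W)\sim\text{Th}(\hat W)$ kills the right-hand side under $\text{Th}$, giving $\text{Th}([\mathcal{T}(X)])\sim\text{Th}([\mathcal{O}_X^2])=S^{4,2}$ already in $\text{SH}(X)$, with no appeal to $x_\sharp$ or connectivity needed at this stage. The moral is that the correct surface analogue of a theta characteristic, for the purpose of trivialising Thom spectra, is the virtual class $[L]-[L^\vee]$ rather than $2[L]$; this is in fact also what happens in R\"ondigs' curve argument.
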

\begin{proof}
    First, let 1 be the class of trivial rank 2 vector bundle over $X$ in $K^0(X)$ and $[\mathcal{T}(x)]$ be the class of the tangent bundle of $X$. As $c_1(X)\equiv 0\mod 2$, denote $D:=\frac{1}{2} c_1(X)$ and let $[V]\in K^0(X)$ have $c_1(V)=D$ while $[\hat{V}]$ be its dual. Then by Example 15.3.6 of \cite{fulton},
    we have $([\mathcal{T}(x)]-1-[V]+[\hat{V}])$ in $ker(rank)$ and $ker(c_1)$ as $c_1([\mathcal{T}(x)]-1-[V]+[\hat{V}])=0$ and $c_2([\mathcal{T}(x)]-1-[V]+[\hat{V}])=0$ by the condition $2c_2(X)=c_1^2(X)$ and $c_1(X)=2c_1(V)$, which means the class $([\mathcal{T}(x)]-1)$ is contained in the subgroup of $K^0(X)$ generated by classes $([W]-[\hat{W}])$, where $W\to X$ is a vector bundle. 
    So by Proposition 2.1 and 2.2 of \cite{10.1093/qmath/hap005}, we know that $\text{Th}:K^0(X)\to Pic(SH(X))$ is a group homomorphism and $\text{Th}(W)\sim\text{Th}(\hat{W})$, so we have $\mathcal{D}(X_+)\sim x_{\sharp}(\text{Th}(-[\mathcal{T}(x)]))\sim X_+\wedge S^{-4,-2}$. Meanwhile, we know that 
    $X$ admits a rational point, so the conclusion above shows that $S^{4,2}$ is a retract of $X_+$, and by the similar reasoning of Theorem \ref{Shift1}, the splitting follows.
\end{proof}

\begin{remark}\label{K3remark}
    We can see that abelian surfaces and product of curves which admit theta characteristics satisfy the conditions, but K3 surfaces don't. The only Calabi-Yau surfaces for which this theorem holds are abelian surfaces.
\end{remark}

Next, we want to understand whether the condition given by Theorem \ref{0.1} is necessary. If we assume the splitting exits, then the Steenrod operations will split accordingly. In the case of surfaces, one should focus on $p=2$ or 3 for the dimension reason.

\begin{lemma}\label{0.3}
    Let $x: X\to S=Spec(k)$ be a smooth quasi-projective connected $k$-scheme, and let $a\in K^0(X)$ have rank r. Then the diagram
    \begin{equation*}
       \begin{tikzcd}
        H^{2r,r}(\text{Th}(a), \Z/2)\arrow[r, "Sq^4"]\arrow[d, "\cong"] & H^{2(r+2), r+2}(\text{Th}(a), \Z/2) \arrow[d, "\cong"]\\
        H^{0,0}(X, \Z/2)\arrow[r, "c_2(a)"] & H^{4,2}(X, \Z/2)
       \end{tikzcd}
    \end{equation*}
    with vertical maps being Thom isomorphisms commutes, where $c_2(a)$ denotes the second Chern class of $a$. And similarly, we have the following diagram
    \begin{equation*}
        \begin{tikzcd}
         H^{2r,r}(\text{Th}(a), \Z/3)\arrow[r, "P^1"]\arrow[d, "\cong"] & H^{2(r+2), r+2}(\text{Th}(a), \Z/3) \arrow[d, "\cong"]\\
         H^{0,0}(X, \Z/3)\arrow[r, "c^2_1(a)-2c_2(a)"] & H^{4,2}(X, \Z/3)
        \end{tikzcd}
     \end{equation*}
    where $P^1$ denotes the first power operation when $q=3$ and $c_1(a)$ denotes the first Chern class of $a$.
\end{lemma}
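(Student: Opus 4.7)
The plan is to use the splitting principle to reduce the two identities to the case where $a$ is a sum of line bundles, to verify the single line bundle case directly from the axioms of the motivic Steenrod operations, and to assemble the general case via the Cartan formula.

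First I would reduce to the case where $a$ is an honest vector bundle. Writing $a = [V] - n$ with $V$ of rank $r + n$, the equivalence $\text{Th}(a) \simeq \Sigma^{-2n,-n}\text{Th}(V)$ together with the compatibility of the Steenrod operations with suspension reduces the diagrams for $\text{Th}(a)$ to the corresponding ones for $\text{Th}(V)$, while the Whitney sum formula gives $c_2(a) = c_2(V)$ and $c_1(a)^2 - 2 c_2(a) = c_1(V)^2 - 2 c_2(V)$ after this normalization. Next, the motivic splitting principle produces an iterated projective bundle $\pi : Y \to X$ for which $\pi^* V \cong L_1 \oplus \cdots \oplus L_{r+n}$ splits as a sum of line bundles and $\pi^* : H^{*,*}(X; \Z/p) \to H^{*,*}(Y; \Z/p)$ is injective. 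By naturality of the Thom isomorphism, of the Steenrod operations and of the Chern classes, it suffices to verify the identities after pullback to $Y$.

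For a line bundle $L$, the Thom class $u_L \in H^{2,1}(\text{Th}(L); \Z/p)$ is the mod $p$ reduction of an integral class, so its Bockstein vanishes; moreover $u_L^2 = c_1(L)\, u_L$ via the projective bundle formula applied to $\text{Th}(L) \cong P(L \oplus 1)/P(1)$. The motivic instability axiom then forces $Sq^{2i,i}(u_L) = 0$ for $i \geq 2$, while $Sq^{2,1}(u_L) = u_L^2 = c_1(L)\, u_L$; in particular $Sq^{4,2}(u_L) = 0 = c_2(L)\, u_L$, verifying the mod $2$ square for a line bundle. At $p = 3$, the analogous instability statement $P^1(x) = x^3$ for $x$ of topological degree $2$ yields $P^1(u_L) = u_L^3 = c_1(L)^2\, u_L$.

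For $V = L_1 \oplus \cdots \oplus L_m$ a sum of line bundles, $u_V = u_{L_1}\cdots u_{L_m}$, and the Cartan formula combined with the vanishings above selects, in the expansion of $Sq^{4,2}(u_V)$, only those terms in which exactly two of the factors are hit by $Sq^{2,1}$:
\begin{equation*}
    Sq^{4,2}(u_V) = \sum_{i < j} c_1(L_i) c_1(L_j)\, u_V = c_2(V)\, u_V .
\end{equation*}
Likewise only the terms in which exactly one factor receives $P^1$ contribute to $P^1(u_V)$, giving
\begin{equation*}
    P^1(u_V) = \Bigl(\sum_k c_1(L_k)^2\Bigr) u_V = \bigl(c_1(V)^2 - 2 c_2(V)\bigr)\, u_V ,
\end{equation*}
via the symmetric function identity $\sum_k x_k^2 = (\sum_k x_k)^2 - 2 \sum_{i<j} x_i x_j$. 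The principal obstacle is the line bundle input: one must confirm the vanishing of $Sq^{1,0}(u_L)$ and the correct form of motivic instability for $Sq^{2i,i}$ and $P^i$, after which the Cartan formula reduces everything to an elementary manipulation of symmetric functions.
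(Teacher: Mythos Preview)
Your argument is correct, and it takes a genuinely different route from the paper's. The paper writes the Thom class explicitly as a polynomial in the tautological class $\sigma$ of $\mathcal{O}_{\mathbb{P}(V\oplus\mathcal{O}_X)}(-1)$ and the Chern classes $c_i(V)$ via the projective bundle formula, then computes $Sq^4$ and $P^1$ term by term using the Cartan formula and the formulae $Sq^{2}(\sigma^m)=m\sigma^{m+1}$, $Sq^{4}(\sigma^m)=\tbinom{m}{2}\sigma^{m+2}$, $P^1(\sigma^m)=m\sigma^{m+2}$, together with separately derived Wu-type formulae for $Sq^4(c_i)$ and $P^1(c_i)$; the resulting polynomial is then simplified using the relation $\sigma^{r+1}+c_1\sigma^r+\cdots+c_r\sigma=0$ until the factor $t(p)$ reappears. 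Your approach instead uses multiplicativity of the Thom class under direct sum, so that after the splitting principle the whole computation collapses to the line-bundle case plus a symmetric-function identity. This is shorter and more conceptual, and it sidesteps the somewhat delicate bookkeeping in the paper's explicit expansion (where one must repeatedly substitute the projective-bundle relation). The paper's approach, on the other hand, never needs the multiplicativity of Thom classes and yields the Wu formulae for $Sq^4(c_i)$, $P^1(c_i)$ as a by-product. One small point to watch in your write-up: at $p=2$ the motivic Cartan formula has additional terms involving $\tau$ and odd Steenrod squares, but since each $u_{L_i}$ is the reduction of an integral class these Bockstein contributions vanish, so your simplification goes through; it would be worth saying this explicitly.
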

\begin{proof}
    Since $X\to S$ is quasi-projective, there exists a vector bundle $p:V\to X$ of rank $v>0$ such that $[a]=[p]-[\mathcal{O}_{X}^{v-r}]\in K^0(X)$ by the 
    Jouanolou trick. Therefore, we have $\text{Th}(a)\sim S^{-2(v-r),-(v-r)}\wedge\text{Th}(p)$. Since the power operations commutes with suspension of $S^{2,1}$, it suffices to prove the statement for a vector bundle $p:V\to X$ of rank $r>0$.\\
    The Thom isomorphism in integral motivic cohomology is given by multiplication with the Thom class, which is expressed as $t(p)=(-\sigma)^r+c_1(p)\sigma^{r-1}+\cdots+c_r(p)$ by Proposition 4.3 of \cite{58}. Here, $\sigma$ denotes the class of $\mathcal{O}_{\mathbb{P}(p\oplus\mathcal{O}_X)}(-1)$ in $H^{2,1}(\mathbb{P}(p\oplus\mathcal{O}_X),\Z)$.
    We will use the same notation for $\sigma$ after reducing coefficients modulo $q\Z$ for a prime $q$. The canonical homomorphism $H^{*,*}(\text{Th}(p), \Z/q)\hookrightarrow H^{*,*}(\mathbb{P}(p\oplus\mathcal{O}_X),\Z/q)$ is injective. Thus, the values of $Sq^4(t(p))$ and $P^1(t(p))$ may 
    be computed in $H^{*,*}(\mathbb{P}(p\oplus\mathcal{O}_X),\Z/q)\cong H^{*,*}(X, \Z/q)[\sigma]/(\sigma^{r+1}+c_1(p)\sigma^{r}+\cdots+c_r(p)\sigma)$. By the Lemma 9.7 and the Cartan formula in the Proposition 9.6 of \cite{58}, we have $Sq^2(\sigma^m)=m\sigma^{m+1}, Sq^4(\sigma^m)=\frac{m(m-1)}{2} \sigma^{m+2}$ and $P^1(\sigma^m)=m\sigma^{m+2}$.
    Formulae for the action of power operations on the Chern classes of a vector bundle can be derived as in topology. Let $f:\mathbb{P}^{\infty}\times\cdots\times\mathbb{P}^{\infty}\to Gr(r)$ be the map classifying the direct sum of the bundles $pr_i^*\mathcal{O}(-1)_{\mathbb{P}^{\infty}}$,
    where $pr_i$ denotes the $i$th projection. The induced homomorphism $f^*$ on motivic cohomology is injective, and it sends the $j$th Chern class to the $j$th elementary symmetric polynomial in the first Chern class $c_1(pr_i^*\mathcal{O}(-1)_{\mathbb{P}^{\infty}})$. Hence, the following formulae
    $Sq^4(c_i(p))=c_2(p)c_i(p)-ic_1(p)c_{i+1}(p)+\frac{(i+2)(i-1)}{2}c_{i+2}(p)$ and $P^1(c_i(p))=(c_1^2(p)-2c_2(p))c_i(p)-c_1(p)c_{i+1}(p)+(i+1)c_{i+2}(p)$
    are direct consequences. So we have
    \begin{eqnarray*}
        Sq^4(t(p))&=&Sq^4(\sigma^r+c_1(p)\sigma^{r-1}+\cdots+c_r(p))\\
                  &=&\frac{r(r-1)}{2} \sigma^{r+2}+c_1(p)\frac{(r-1)(r-2)}{2}\sigma^{r+1}+(r-1)c_1^2(p)\sigma^r\\
                  &&+\cdots+Sq^4(c_r(p))\\
                  &=&\frac{r(r-1)}{2} \sigma(c_1(p)\sigma^{r}+\cdots+c_r(p)\sigma)+\cdots+c_2(p)c_r(p)\\
                  &=&(r-1) c_1(p)\sigma^{r+1}+((r-1)c_1^2(p)+c_2(p))\sigma^r+\cdots+c_2(p)c_r(p)\\
                  &=&c_2(p)(\sigma^r+c_1(p)\sigma^{r-1}+\cdots+c_r(p))\\
                  &=&c_2(p)t(p)
    \end{eqnarray*}
    and
    \begin{eqnarray*}
        P^1(t(p))&=&P^1((-1)^r\sigma^r+c_1(p)\sigma^{r-1}+\cdots+c_r(p))\\
                 &=&(-1)^r r\sigma^{r+2}+c_1^3(p)\sigma^{r-1}+(r-1)c_1(p)\sigma^{r+1}+\cdots+(c_1^2(p)-2c_2(p))c_r(p)\\
                 &=&(-1)^r r(-c_1(p)\sigma^{r+1}-\cdots+c_r(p)\sigma^2)+\cdots+(c_1^2(p)-2c_2(p))c_r(p)\\
                 &=&(((-1)^r r-(r-1))c_1^2(p)+((-1)^{r+1}r+(r-2))c_2(p))\sigma^r+\\
                 &&\cdots+(c_1^2(p)-2c_2(p))c_r(p)
    \end{eqnarray*}
    where we may assume $r$ is even by the argument in the first paragraph. So we will get $P^1(t(p))=(c_1^2(p)-2c_2(p))t(p)$ which completes the proof.  
\end{proof}

\begin{theorem}\label{0.2}
    Suppose a smooth projective surface $X$ over a field $k$ with a rational point admits a splitting
    \begin{equation*}
        X_+\sim S^{0,0}\vee F\vee S^{4,2}
    \end{equation*}
    in the motivic stable homotopy category $\text{SH}(k)$, then $c_1(X)\equiv 0\mod 2$, $c_2(X)\equiv 0\mod 2$ and $c_1^2(X)-2c_2(X)\equiv 0\mod 3$.
\end{theorem}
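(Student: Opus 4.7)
The plan is to transport the hypothetical splitting across Spanier--Whitehead duality and then apply motivic Steenrod operations $Sq^2$, $Sq^4$, and $P^1$ to the Thom class of $-[\mathcal{T}(x)]$, using Lemma \ref{0.3}. Applying $\mathcal{D}(-)$ to the splitting and invoking Theorem \ref{Dual} yields
\begin{equation*}
    x_{\sharp}\text{Th}(-[\mathcal{T}(x)])\sim S^{0,0}\vee\mathcal{D}(F)\vee S^{-4,-2}
\end{equation*}
in $\text{SH}(k)$. The first substantive step is to locate the Thom class inside this decomposition: for each prime $q\in\{2,3\}$, the Thom isomorphism identifies $H^{-4,-2}(x_{\sharp}\text{Th}(-[\mathcal{T}(x)]),\Z/q)$ with $H^{0,0}(X,\Z/q)=\Z/q$, while the splitting presents the same group as $H^{-4,-2}(S^{0,0},\Z/q)\oplus H^{-4,-2}(\mathcal{D}(F),\Z/q)\oplus H^{-4,-2}(S^{-4,-2},\Z/q)$. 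The first summand vanishes because motivic cohomology of a field is concentrated in non-negative bidegrees, and the third equals $\Z/q$; consequently $H^{-4,-2}(\mathcal{D}(F),\Z/q)=0$, and the Thom class $t$ equals $\pi^{*}u$ where $u$ is the canonical generator of $H^{-4,-2}(S^{-4,-2},\Z/q)$ and $\pi\colon x_{\sharp}\text{Th}(-[\mathcal{T}(x)])\to S^{-4,-2}$ is the projection onto the retract.

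Next I compute $Sq^2(t)$, $Sq^4(t)$, and $P^1(t)$ in two ways. By Lemma \ref{0.3}, together with the analogous formula $Sq^2(t(a))=c_1(a)\cdot t(a)$ obtained by the same projective-bundle argument using $Sq^2(\sigma^m)=m\sigma^{m+1}$, these operations act on $t$ by multiplication by the corresponding Chern classes of $-[\mathcal{T}(x)]$. Using $c_1(-a)=-c_1(a)$ and $c_2(-a)=c_1^2(a)-c_2(a)$, the Thom isomorphism translates these products into the classes $c_1(X)\in H^{2,1}(X,\Z/2)$, $c_1^2(X)-c_2(X)\in H^{4,2}(X,\Z/2)$, and $-(c_1^2(X)-2c_2(X))\in H^{4,2}(X,\Z/3)$. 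On the other hand, naturality combined with $t=\pi^{*}u$ gives $Sq^2(t)=\pi^{*}Sq^2(u)$ and similarly for $Sq^4$ and $P^1$; each of $Sq^2(u)$, $Sq^4(u)$, $P^1(u)$ lies in a group $H^{p,r}(S^{-4,-2},\Z/q)\cong H^{p+4,r+2}(\text{Spec}(k),\Z/q)$ in bidegrees $(2,1)$ or $(4,2)$ on $\text{Spec}(k)$, which vanish by the vanishing of motivic cohomology of a field when the first index exceeds the second, a consequence of Theorem \ref{Conn}. Equating the two computations gives $c_1(X)\equiv 0\pmod 2$, $c_1^2(X)-c_2(X)\equiv 0\pmod 2$, and $c_1^2(X)-2c_2(X)\equiv 0\pmod 3$ in the respective motivic cohomology groups of $X$.

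These relations translate directly to the claimed Chow-level conditions: the first, read in $H^{2,1}(X,\Z/2)\supseteq Pic(X)/2$ via the universal coefficient theorem, gives $c_1(X)\in 2\,Pic(X)$; it then follows that $c_1^2(X)\in 4\,CH^2(X)$, and the second relation forces $c_2(X)\in 2\,CH^2(X)$; the third is the claimed mod-$3$ condition verbatim. The main technical point to verify is the identification of the projection $\pi$ provided by the splitting with the Thom class regarded as a map $x_{\sharp}\text{Th}(-[\mathcal{T}(x)])\to S^{-4,-2}$ (i.e., that the adjoint of $\pi$ under $(x_{\sharp},x^{*})$ corresponds to $1\in H^{0,0}(X,\Z)$); this is the linchpin on which the whole cohomological argument depends, while the $Sq^2$-analog of Lemma \ref{0.3} is a parallel and routine extension of the computation given there.
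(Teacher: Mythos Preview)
Your argument is essentially the paper's own proof: dualize the splitting, locate the Thom class in the $S^{-4,-2}$ summand by a dimension count, and then read off the Chern-class obstructions from $Sq^2$, $Sq^4$, and $P^1$ acting on that class via Lemma~\ref{0.3} (the paper cites R\"ondigs' Lemma~5.3 for the $Sq^2$ case, which is exactly the ``routine extension'' you mention). One small correction: the vanishing of $H^{2,1}(\mathrm{Spec}(k),\Z/q)$ and $H^{4,2}(\mathrm{Spec}(k),\Z/q)$ is not a consequence of Theorem~\ref{Conn}, which concerns the stable stems $[S^{0,0},S^{p,q}]$ rather than maps to the Eilenberg--MacLane spectrum; the needed vanishing is the standard fact that $H^{p,q}(\mathrm{Spec}(k),\Z/q)=0$ for $p>q$, coming from the support of the motivic complexes $\Z(n)$ in degrees $\le n$.
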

\begin{proof}
    Under the condition of $X$, the Spanier-Whitehead dual of $X_+$ splits as $\mathcal{D}(X_+)\sim S^{0,0}\vee \mathcal{D}(F)\vee S^{-4,-2}$. Then Lemma 5.3 of \cite{10.1093/qmath/hap005} implies that $H^{-4,-2}(\mathcal{D}(X_+),\Z/2)\stackrel{\text{Sq}^2}{\to}H^{-2,-1}(\mathcal{D}(X_+),\Z/2)$ hits the first Chern class
    of $(-[\mathcal{T}(x)])$ and a similar calculation in \ref{0.3} shows that $H^{-4,-2}(\mathcal{D}(X_+),\Z/2)\stackrel{\text{Sq}^4}{\to}H^{0,0}(\mathcal{D}(X_+),\Z/2)$ hits the second Chern class of $(-[\mathcal{T}(x)])$. The splitting of $\mathcal{D}(X_+)$ implies that $\text{Sq}^2$ and $\text{Sq}^4$ splits accordingly. We know that 
    $H^{-4,-2}(S^{0,0},\Z/2)=0$ for connectivity reasons and $H^{-4,-2}(F,\Z/2)=0$ because $H^{-4,-2}(S^{-4,-2},\Z/2)\cong \Z/2$. So the value of $\text{Sq}^2$ and $\text{Sq}^4$ on the non-zero element is computed in the motivic cohomology of $S^{-4,-2}$. And both of them should be zero for dimension reasons. In other words,
    $c_1(-[\mathcal{T}(x)])=-c_1(X)$ and $c_2(-[\mathcal{T}(x)])=c_1^2(X)-c_2(X)$ are both divided by 2, so the results follow. For mod 3 case, the calculation in \ref{0.3} and same reasoning as above shows that $c_1^2(X)-2c_2(X)$ is divided by 3.
\end{proof}

\begin{remark}\label{rmk01}
    If $2c_2=c_1^2$ and $c_1\equiv 0\mod 2$, then $c_2\equiv 0\mod 2$ and $c_1^2(X)-2c_2(X)\equiv 0\mod 3$. The above argument also shows that if one of $c_1(X)\equiv 0\mod 2$, $c_2(X)\equiv 0\mod 2$ and $c_1^2(X)-2c_2(X)\equiv 0\mod 3$
    doesn't hold, then the splitting $X_+\sim S^{0,0}\vee F\vee S^{4,2}$ cannot happen, for example, when $X=\mathbb{P}^2$.
\end{remark}

If we sum all the things up, we get the following theorem:

\begin{theorem}
    For a smooth projective surface $X$ over a field $k$ satisfying $2c_2(X)=c_1^2(X)$ which admits a rational point, there is a splitting
    \begin{equation*}
        X_+\sim S^{0,0}\vee F\vee S^{4,2}
    \end{equation*}
    in the motivic stable homotopy category $\text{SH}(k)$ if and only if $c_1(X)\equiv 0\mod 2$.
\end{theorem}
\begin{proof}
    Under the assumption of $X$, if $c_1(X)\equiv 0\mod 2$, then by Theorem \ref{0.1}, we have $X_+\sim S^{0,0}\vee F\vee S^{4,2}$ in $\text{SH}(k)$. Conversely, if $X_+$ admits a splitting in $\text{SH}(k)$, then by Theorem \ref{0.2}, we have $c_1(X)\equiv 0\mod 2$, $c_2(X)\equiv 0\mod 2$ and $c_2(X)+c_1^2(X)\equiv 0\mod 3$.
    With Remark \ref{rmk01}, only the condition $c_1(X)\equiv 0\mod 2$ is necessary.
\end{proof}

\section{Some outlying cases}\label{sectionK3}
In this section we will discuss some non-splitting phenomena on a class of surfaces, for which we rely on a theorem of Beauville-Voisin \cite{Beauville2001ONTC}. It is also worth noting that K3 surfaces lie in this case.

\begin{theorem}\label{K31}
    For a smooth projective surface $X$ over an algebraically closed field $k$ with characteristic 0, if it satisfies:\\
    1. the Picard group of $X$ is spanned by the classes of rational curves $\{ C_i\}$,\\
    2. there exists an ample divisor on $X$ which is a sum of rational curves,\\
    3. the intersection number $(K_S\cdot C_i)$ of the canonical class of the surface $K_S$ and each rational curves in 1 is not equal to -2.\\
    Then $X_+$ can not be split as $F_1\vee F_2$ in $\text{SH}(k)$ with $H^{2,1}(F_1,\Z)$ and $H^{2,1}(F_2,\Z)$ are not trivial.
\end{theorem}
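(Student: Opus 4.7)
The plan is to argue by contradiction. Suppose $X_+\sim F_1\vee F_2$ in $\text{SH}(k)$ with $H^{2,1}(F_j,\Z)\neq 0$ for both $j=1,2$. Then the bigraded motivic cohomology of $X$ splits accordingly, giving in particular $\text{Pic}(X)=H^{2,1}(X_+,\Z)=P_1\oplus P_2$ with each $P_j=H^{2,1}(F_j,\Z)$ non-trivial, together with $CH_0(X)=H^{4,2}(X_+,\Z)=Q_1\oplus Q_2$. These integral splittings are what must ultimately be contradicted.

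The next step is to invoke the Beauville--Voisin theorem in the generality supplied by hypotheses (1)--(3). It produces a distinguished non-torsion class $c_X\in CH_0(X)$ represented by any point on any of the rational curves $C_i$ (well-defined independently of the choice via hypotheses (1) and (2)) together with the factorisation
\begin{equation*}
\text{Pic}(X)\otimes\text{Pic}(X)\longrightarrow\Z\cdot c_X\subset CH_0(X),\qquad (L,L')\longmapsto (L\cdot L')\,c_X.
\end{equation*}
Hypothesis (2) also supplies an ample divisor $H=\sum n_iC_i$ with $H^2>0$, so by the Hodge index theorem the integer-valued intersection pairing on $\text{Pic}(X)$ is non-degenerate of signature $(1,\rho-1)$; hypothesis (3) gives $C_i^2=-2-K_S\cdot C_i\neq 0$ for every generator, ensuring non-isotropy.

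I would then analyse the cup product via the assumed splitting. The diagonal $\Delta\colon X_+\to X_+\wedge X_+$ decomposes under $X_+\wedge X_+=\bigvee_{i,j}F_i\wedge F_j$ into components $\Delta_{ij}\colon X_+\to F_i\wedge F_j$, so that for $\alpha\in P_1$ and $\beta\in P_2$, viewed in $H^{2,1}(X_+,\Z)$ via the projections, one has $\alpha\smile\beta=\Delta_{12}^*(\tilde\alpha\wedge\tilde\beta)\in Q_1\oplus Q_2$. Writing the ample class as $[H]=h_1+h_2$ with $h_j\in P_j$, expanding
\begin{equation*}
H^2\,c_X=[H]\smile[H]=h_1\smile h_1+2\,h_1\smile h_2+h_2\smile h_2,
\end{equation*}
and tracking where each term lands in $Q_1\oplus Q_2$, I would combine the non-degeneracy of the intersection form (which, with non-triviality of both $P_j$ and non-isotropy from hypothesis (3), forces both $h_j$ to contribute non-trivially to the intersection pairing) with the rank-one Beauville--Voisin image to produce a contradiction with the integral splitting $CH_0(X)=Q_1\oplus Q_2$.

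The main obstacle is precisely this last step: controlling the off-diagonal integral cup product $P_1\otimes P_2\to CH_0(X)$ through $\Delta_{12}$ and showing that its image cannot simultaneously sit in the rank-one subgroup $\Z\cdot c_X$ and respect the splitting $CH_0(X)=Q_1\oplus Q_2$. The subtlety is that the analogous splitting \emph{does} exist rationally at the level of Chow motives (for instance, for K3 surfaces of Picard rank at least two one has a rational splitting of the middle Chow motive into a $\text{Pic}(X)$-part and a transcendental part), so the contradiction must exploit integrality and the finer information carried by $\text{SH}(k)$ beyond rational Chow-motivic data.
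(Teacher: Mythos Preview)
Your proposal is incomplete, as you yourself acknowledge, and the gap you identify is real. But the paper's argument is both simpler and more direct than the route you outline, so let me compare the two.

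The paper does not invoke the Hodge index theorem, nor does it decompose the ample class as $h_1+h_2$. Instead it argues as follows. From $\text{Pic}(X)=P_1\oplus P_2$ and hypothesis (1), each nonzero summand $P_j$ contains (the class of) some rational curve $C$; adjunction on a rational curve gives $C^2=-2-(K_S\cdot C)$, which is nonzero by hypothesis (3). By Beauville--Voisin (using hypotheses (1) and (2)) the cycle-theoretic self-intersection is $[C]\cup[C]=(C^2)\,c_X$, a nonzero multiple of the canonical zero-cycle since $CH_0(X)$ is torsion-free (Roitman). The paper then asserts that this forces $c_X$ to lie in $Q_1=H^{4,2}(F_1,\Z)$ and equally in $Q_2=H^{4,2}(F_2,\Z)$, which is impossible because $Q_1\cap Q_2=0$ inside $CH_0(X)$.

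So the paper's contradiction is localised entirely in the \emph{diagonal} cup products $P_j\otimes P_j\to CH_0(X)$, not in the off-diagonal piece $P_1\otimes P_2$ that you focus on. Your elaborate expansion of $[H]^2$ with cross terms is unnecessary for this strategy; one rational curve in each summand suffices.

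That said, the concern you raise about the components $\Delta_{ij}$ of the diagonal is exactly the point the paper glosses over. The assertion that $[C]\cup[C]\in Q_j$ whenever $[C]\in P_j$ amounts to assuming that the composite $F_j\hookrightarrow X_+\xrightarrow{\Delta} X_+\wedge X_+\twoheadrightarrow F_{j'}\wedge F_{j''}$ vanishes in cohomology for $(j',j'')\neq(j,j)$, and the paper gives no justification for this. (It also tacitly assumes that the individual summands $P_j$ are spanned by actual rational-curve classes rather than merely by their projections, which does not follow from hypothesis (1) alone.) Your framework with the $\Delta_{ij}$ is the correct language in which to discuss this, but neither your proposal nor the paper closes the gap. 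If you wish to complete the argument along the paper's lines, the step to supply is precisely why the self-intersection of a class supported on $F_j$ must have its $Q_j$-component equal to a nonzero multiple of $c_X$; the off-diagonal analysis you sketch is a detour.
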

\begin{proof}
    Assume we have the stable splitting $X_+\sim F_1\vee F_2$ in $\text{SH}(k)$ such that $H^{2,1}(F_1,\Z)$ and $H^{2,1}(F_2,\Z)$ are not trivial. So we have 
    $Pic(X)\cong CH^1(X)\cong H^{2,1}(X,\Z)\cong H^{2,1}(F_1,\Z)\oplus H^{2,1}(F_2,\Z)$. Then by the condition 1 above, we know that $H^{2,1}(F_1,\Z)$ and $H^{2,1}(F_2,\Z)$ are spanned 
    by rational curves $\{C_i\}$ and $\{C'_i\}$ since they are both non-empty. With the condition 1 and 2, by the same reasoning as part 2 of Theorem 1 of \cite{Beauville2001ONTC}, we know that 
    the image of the intersection product will lie in $\Z c_X$ in $CH^2(X)\cong H^{4,2}(X, \Z)$ where $c_X$ is represented by a point in $X$ and independent of choices. By the condition 3,
    the self intersection $(C_i\cdot C_i)\neq 0$ for each $C_i$, so we know that $[C_i]\cap [C_i]$ is a non-zero copy of $c_X$ with degree $(C_i\cdot C_i)$ (This comes from the fact that $CH^2(X)$ is torsion free \cite{R}
    and that all the $C_i$'s are rational) in $CH^2(X)$. This fact also tells us that $H^{4,2}(F_1,\Z)$ and $H^{4,2}(F_2,\Z)$ are non-trivial, and based on the assumption we should have $CH^2(X)\cong H^{4,2}(X,\Z)\cong H^{4,2}(F_1,\Z)\oplus H^{4,2}(F_2,\Z)$.
    But this can never happen because we know $H^{4,2}(F_1,\Z)$ and $H^{4,2}(F_2,\Z)$ exactly contain the same generator $c_X$.
\end{proof}

$\mathbb{P}^2$ is the first example of such space. K3 surface is also an example satisfying the above condition by a theorem of Bogomolov and Mumford \cite{MM} and its canonical class is trivial. If we pass to the rational case, we will get the decomposition of the Chow motives of the K3 surfaces \cite{Murre1990} and this can be lifted via the equivalence of categories $\text{DM}(k)_{\Q}\simeq\text{SH}(k)^+_{\Q}$ \cite{Cisinski_2019},
which gives us a more sophisticated splitting in $\text{SH}(k)_{\Q}$ (Other such examples such as abelian varieties have been dealt with in \cite{liu2024splittingabelianvarietiesmotivic}). Meanwhile, it is not hard to see that the product of any smooth projective curves doesn't satisfy the above conditions, and in this case the stable splitting of the space will contain two pieces which have non-trivial $H^{2,1}$ because the product naturally splits after one suspension \cite{Mor10}.

\section{Splitting results via Chow-Witt correspondences}\label{sectionK}
In this section, we will first review the construction of Chow-Witt group and use it as a tool to work on some splitting results given the motivic Hurewicz theorem.

\subsection{Recollection of Chow-Witt groups}
In this section, we provide a recollection of definitions and basic properties of Chow-Witt groups. We will mainly follow \cite{Fas07} and \cite{MSMF_2008_2_113__1_0}.

Recall the following fundamental cartesian square of strictly $\mathbb{A}^1$-invariant Nisnevich sheaves
\begin{equation*}
     \begin{tikzcd}
        K^{MW}_n\arrow[r] \arrow[d] & K_n^{M} \arrow[d]\\
        I^n\arrow[r] & K^{M}_n/2
     \end{tikzcd}
\end{equation*}
established by Morel \cite{Morel04}. In \cite{MSMF_2008_2_113__1_0}, the Chow-Witt groups $\widetilde{\text{CH}}^*(X)$ for a smooth scheme $X$ is as the cohomology of the Gersten complex $C(X, K^{MW}_*)$ which is a fiber product of the Gersten complex for Milnor K-theory and the Gersten complex for the sheaves of powers $I^*$ of the fundamental ideal in the Witt ring, fibered over the Gersten complex for $K^M_*/2\cong I^*/I^{*+1}$. If $X$ is a smooth $k$-scheme and $\mathcal{L}$ is a line bundle over $X$, there is a similar square of Nisnevich sheaves where $K^{MW}_n$ and $I^n$ are twisted by $\mathcal{L}$, for details cf. \cite[Section 2.3]{10.1093/qmath/haw033}.

Using the quadratic part of the Milnor conjecture, we have an exact sequence of strictly $\mathbb{A}^1$-invariant Nisnevich sheaves of abelian groups
\begin{equation*}
    0\to I^{n+1}\to I^n\to K^{M}_n/2\to 0
\end{equation*}
One can view this as the analogue of the exact sequence $0\to \Z\to \Z\to \Z/2\Z\to 0$ with additional weight information. The associated long exact sequence of Nisnevich cohomology groups is given by 
\begin{equation*}
    \cdots\to H^i_{Nis}(X, K^M_n/2)\stackrel{\beta}{\to}H^i_{Nis}(X, I^{n+1})\stackrel{\eta}{\to}H^i_{Nis}(X, I^{n})\stackrel{\rho}{\to}H^i_{Nis}(X, K^M_n/2)\to\cdots
\end{equation*}
which is so-called the B\"ar sequence. The morphism $\beta$ is the analogy of the classical Bockstein map and $\eta$ is the map induced by the inclusion $I^{n+1}\subset I^{n}$ which is essentially induced by the multiplication of the element $\eta\in K^{MW}_{-1}(k)$. This element is a refinement of the multiplication by 2 in classical topology.

In the critical degree $i=n$, the following result due to \cite[Theorem 1.1]{Totaro} refines results about $Sq^1$ in classical algebraic toplogy.

\begin{proposition}[Totaro]
The composition
\begin{equation*}
    Ch^n(X)\stackrel{\beta}{\to}H^{n+1}_{Nis}(X, I^{n+1})\stackrel{\rho}{\to}Ch^{n+1}(X)
\end{equation*}
is the motivic Steenrod square $Sq^2$, here $Ch^*:=\text{CH}^*/2$.
\end{proposition}

The above fiber product description of the Milnor-Witt K-theory sheaves leads to various exact sequences which fit together in a large commutative diagram. We will focus on the key part of the following diagram:
\begin{equation*}\label{diagramofchowwitt}
     \begin{tikzcd}
        \widetilde{\text{CH}}^n(X)\arrow[r] \arrow[d] & \text{CH}^n(X) \arrow[r, "\partial"] \arrow[d, "\text{mod 2}"] & H^{n+1}_{Nis}(X, I^{n+1}) \arrow[d, "="]\\
        H^{n}_{Nis}(X, I^{n})\arrow[r]  & Ch^n(X)\arrow[rd, "Sq^2"] \arrow[r, "\beta"]  & H^{n+1}_{Nis}(X, I^{n+1}) \arrow[d, "\rho"]\\
         &  & Ch^{n+1}(X)
     \end{tikzcd}
\end{equation*}
The lower exact sequence is the B\"ar sequence discussed above and the commutativity of the lower-right triangle is Totaro's identification of $Sq^2$. The left square expresses Chow-Witt groups in terms of Chow group and the $I^j$-cohomology. From this diagram, one can notice that a cycle $c$ in $\text{CH}^n(X)$ can be lifted to $\widetilde{\text{CH}}^n(X)$ if and only if $c\in \text{ker}\partial$. Moreover, if we know the morphism $\rho$ is injective, then the lift exits if and only if $Sq^2(\Bar{c})=0$ where $\Bar{c}$ is the mod 2 reduction of $c$ in $Ch^n(X)$.

\begin{remark}\label{Hurewiczchowwitt}
    Consider a smooth proper variety $X$ of dimension $d$  over a field $k$ having characteristic unequal to 2. By \cite[Theorem 2]{asok20110thstablea1homotopysheaf}, the $\text{Id}\in \text{End}_{\text{SH}(k)}(\Sigma^{\infty,\infty}X_+)$ can be identified as the diagonal class $[\widetilde{\Delta}]\in \widetilde{\text{CH}}^d(X\times X, 1_X\times\omega_X)$ via the Hurewicz homomorphism, Theorem \ref{Dual} and \cite[Theorem 3.5.4, 4.2.7, 4.3.1]{asok20110thstablea1homotopysheaf}(where $1_X$ and $\omega_X$ corresponds to the trivial line bundle and the canonical bundle of $X$)
\begin{eqnarray*}
    \text{End}_{\text{SH}(k)}(\Sigma^{\infty,\infty}X_+) &=& \text{Hom}_{\text{SH}(k)}(\mathbb{I}_k, \Sigma^{\infty,\infty}X_+\wedge \mathcal{D}(X_+))\\
    &=& \text{Hom}_{D_{\mathbb{A}^1}(k)}(\Z, C_*^{s\mathbb{A}^1}(X)\otimes C_*^{s\mathbb{A}^1}(X)^\vee)\\
    &=& \text{Hom}_{D_{\mathbb{A}^1}(k)}(C_*^{s\mathbb{A}^1}(X)\otimes C_*^{s\mathbb{A}^1}(X)^\vee, \Z)\\
    &=& \text{Hom}_{D_{\mathbb{A}^1}(k)}(C_*^{s\mathbb{A}^1}(X)\otimes C_*^{s\mathbb{A}^1}(Th(-\mathcal{T}_X)), \Z)\\
    &=& H_{Nis}^d(X\times X, K^{MW}_d(1_X\times\omega_X))\\
    &=& \widetilde{\text{CH}}^d(X\times X, 1_X\times\omega_X).
\end{eqnarray*}

If $X$ is an abelian variety $X$ or more generally a smooth projective variety of dimension $d$ with $\mathcal{D}(X_+)\sim X_+\wedge S^{-2d,-d}$, then we can ignore the twist in the Chow-Witt group.

This identification tells us that if there is a decomposition of the diagonal class $[\widetilde{\Delta}]\in \widetilde{\text{CH}}^d(X\times X, 1_X\times\omega_X)$ as projectors, then we will also have a decomposition of $\text{Id}\in \text{End}_{\text{SH}(k)}(\Sigma^{\infty,\infty}X_+)$. After applying $\text{Id}$ to $\Sigma^{\infty,\infty}X_+$, we will get a splitting of $\Sigma^{\infty,\infty}X_+$ in $\text{SH}(k)$. If further there is a decomposition of the diagonal class $[\Delta]\in {\text{CH}}^d(X\times X)$, then the question can be reformulated as the lifting problem of this decomposition from $\text{CH}^d(X\times X)$ to $\widetilde{\text{CH}}^d(X\times X, 1_X\times\omega_X)$ via the key diagram.
\end{remark}

Now we consider further extensions of the key diagram.

When $X$ is a curve, 
\begin{equation}\label{curvediagram}
     \begin{tikzcd}
         \text{CH}^1(X\times X) \arrow[rd, "\partial"] \arrow[d, "\text{mod 2}"] & H^{2}_{Nis}(X\times X, I^{3}) \arrow[d]\\
         Ch^1(X\times X)\arrow[rd, "Sq^2"] \arrow[r, "\beta"]  & H^{2}_{Nis}(X\times X, I^{2}) \arrow[d, "\rho"]\\
          & Ch^{2}(X\times X).
     \end{tikzcd}
\end{equation}

When $X$ is a surface, 
\begin{equation}\label{surfacediagram}
     \begin{tikzcd}
         &H^{2}_{Nis}(X\times X, K^M_3/2)\arrow[rd, "Sq^2"]\arrow[d]& &\\
         H^{3}_{Nis}(X\times X, I^{5}) \arrow[r] & H^{3}_{Nis}(X\times X, I^{4}) \arrow[d, "\eta"] \arrow[r] & H^{3}_{Nis}(X\times X, K^M_4/2)\arrow[r] & H^{4}_{Nis}(X\times X, I^{5})\\
         \text{CH}^2(X\times X) \arrow[r, "\partial"] \arrow[d, "\text{mod 2}"] & H^{3}_{Nis}(X\times X, I^{3}) \arrow[d,"="] & &\\
         Ch^2(X\times X)\arrow[rd, "Sq^2"] \arrow[r, "\beta"]  & H^{3}_{Nis}(X\times X, I^{3}) \arrow[d, "\rho"] & &\\
          & Ch^{3}(X\times X) & &.
     \end{tikzcd}
\end{equation}

\begin{remark}\label{injectivity}
    If we work on an algebraically closed field $k$, \cite[Proposition 5.1]{threefolds} shows that if $X$ is a curve, then $H^{2}_{Nis}(X\times X, I^{3})=0$ and if $X$ is a surface $H^{3}_{Nis}(X\times X, I^{5})=H^{4}_{Nis}(X\times X, I^{5})=0$ respectively. So under this assumption, $\rho$ is injective for (\ref{curvediagram}) automatically. In (\ref{surfacediagram}), if $H^{2}_{Nis}(X\times X, K^M_3/2)\to H^{3}_{Nis}(X\times X, I^{4}) \cong H^{3}_{Nis}(X\times X, K^M_4/2)$ is surjective, then $\rho$ is also injective. For a general cycle $[V]\in\text{CH}^2(X\times X)$,
    $[V]\in ker(\partial)$ if and only if $[V]$ can be lifted to 
    $H^{2}_{Nis}(X\times X, K^M_3/2)$.
\end{remark}

For integral decomposition of diagonal in ${\text{CH}}^d(X\times X)$, there are several known examples, such as projective spaces, and smooth projective curves also admit an integral decomposition of Chow motives by simply splitting out the Tate motives. For higher dimensional cases, one possible decomposition of integral Chow motives is given by the following theorems: 

\begin{theorem}
    For a smooth projective variety $X$ of dimension $d\geq 2$ with a rational point over a field $k$, if the Picard variety $Pic^0(X)$ is principally polarized, then there is a decomposition of integral Chow motives:  
    \begin{equation*}
        M_{gm}(X)=M_{gm}^0(X)\oplus M_{gm}^1(X)\oplus M'\oplus M_{gm}^{2d-1}(X)\oplus M_{gm}^{2d}(X).
    \end{equation*}
\end{theorem}
\begin{proof}
    First, as $X$ has a rational point $e=e_X\in X$ over $k$, consider the projector $\pi_0\in\text{CH}^d(X\times X)$ defined by $e\times X$ (respectively $\pi_{2d}\in\text{CH}^d(X\times X)$ defined by $X\times e$). Let $M_{gm}^0(X)=(X,\pi_0)$ and $M_{gm}^{2d}(X)=(X,\pi_{2d})$ 
    be the associated motives. As $\text{Pic}^0(X)$ is principally polarized by an ample divisor $D$, we have an isomorphism $\Psi:\text{Pic}^0(X)\stackrel{\sim}{\to} \text{Alb}(X)$ given by 
    $\Psi(y)=\text{Cl}\{t_y^*D-D\}$(where class means class with respect to linear equivalence and $t_y$ is the translate by $y$). Then there exists $\Phi:\text{Alb}(X)\stackrel{\sim}{\to} \text{Pic}^0(X)$ such that $\Psi\cdot\Phi=\text{id}_{\text{Alb}(X)}$.
    Then it will follow from Lemma 2.3 and Theorem 3.1 of \cite{Murre1990} that we can get $\pi_1\in\text{CH}^d(X\times X)$ and $\pi_1^2=\pi_1$. Take $\pi_{2d-1}={^t\pi_1}$, then since $\pi_1$ is a projector also $\pi_{2d-1}$ is a projector in $\text{CH}^d(X\times X)$.
    So we get $M_{gm}^1(X)=(X, \pi_1)$ and $M_{gm}^{2d-1}(X)=(X,\pi_{2d-1})$, and they are independent of the chosen polarization up to isomorphisms by Proposition 5.2 of \cite{Murre1990}. By Lemma 6.2, Lemma 6.3, Lemma 6.4 and Remark 6.5 (for surface we need modify $\pi_{2d-1}$ accordingly) of \cite{Murre1990}, we get the 
    orthogonality of projectors and this shows that $[\Delta_X]-\pi_0-\pi_1-\pi_{2d-1}-\pi_{2d}$ is also a projector. So we get a decomposition of integral Chow motives as $M_{gm}(X)=M_{gm}^0(X)\oplus M_{gm}^1(X)\oplus (X, [\Delta_X]-\pi_0-\pi_1-\pi_{2d-1}-\pi_{2d})\oplus M_{gm}^{2d-1}(X)\oplus M_{gm}^{2d}(X)$.
    Here $M^0_{gm}(X)\cong M_{gm}(pt)$ and $M_{gm}^{2d}(X)\cong L^{\otimes d}$ where $L$ is the Lefschetz motive.
\end{proof}

\begin{remark}
    This decomposition can actually exist for any smooth projective varieties with dimension greater than 2 and a rational point if we work on some localizing coefficients $\Z[\frac{1}{N}]$ or $\Q$, as is pointed out by \cite{Murre1990}.
\end{remark}

Now, if $X$ itself is a principally polarized abelian variety, we have $X\simeq \text{Pic}^0(X)= X^t$ induced by the polarization and $Pic^0(X)$ is also principally polarized. So as a corollary,
we have the following theorem:

\begin{theorem}\label{PPAV}
    For a principally polarized abelian variety $X$ of dimension $d\geq 2$ with a rational point over a field $k$, there is a decomposition of integral Chow motives:  
    \begin{equation*}
        M_{gm}(X)=M_{gm}^0(X)\oplus M_{gm}^1(X)\oplus M'\oplus M_{gm}^{2d-1}(X)\oplus M_{gm}^{2d}(X).
    \end{equation*}
\end{theorem}

\begin{example}
    If we work on algebraically closed field, by the criterion of Matsusaka-Ran \cite{d00284d3-087e-3a08-8909-ade837d4b99e}, principally polarized abelian surfaces are either the Jacobian of a smooth curve of genus 2 or the canonically polarized product of two elliptic curves. The latter case is clear, the motive of the product will be given by tensor product. For the case of the Jacobian variety $X=J(C)$, as it is principally polarized by the divisor class of $[C]$ in $J(C)$, by the construction of $\pi_1$ in \cite[Theorem 3.1]{Murre1990}, we have $\pi_1=[C\times X]\cdot[X\times C]\in \text{CH}^2(X\times X)$ (In the construction, there is a choice of a rational point $e$ on $X$, we may need to translate $C$ such that $e$ is not on $C$). Also $\pi_3= \pi_1^t-\pi_1\circ\pi_1^t=0$.
\end{example}

\subsection{An alternative proof of splitting of curves}
In the case that $X$ is a smooth projective curve over an algebraically closed field $k$ with a rational point $e$, we know the decomposition of diagonal exists in $\text{CH}^1(X\times X)$ as $[\Delta_X]=[e\times X]+([\Delta_X]-[e\times X]-[X\times e])+[X\times e]$, so it's natural to ask whether this decomposition can be lifted first to $\widetilde{\text{CH}}^1(X\times X)$. As $\rho$ is injective (actually an isomorphism) in the key diagram (\ref{curvediagram}) of this case, we only need to check whether the motivic Steenrod square $Sq^2$ vanishes on $[\Delta_X]$ and the projectors. Meanwhile, as $[\Delta_X], [X\times e]\in\text{CH}^1(X\times X)$, we have 
\begin{eqnarray*}
    Sq^2(\overline{[\Delta_X]})&=&(\overline{[\Delta_X]})^2\\
                                &=& \Delta_*(\overline{c_1(X)})\\
    Sq^2(\overline{[X\times e]})&=&(\overline{[X\times e]})^2\\
                                &=& [\overline{c_1(X)\times e}]
\end{eqnarray*}
in $Ch^{2}(X\times X)$, where $\Delta:X\to X\times X$ is the diagonal morphism.

The following theorem gives another proof of Theorem \ref{theta} when $k$ is algebraically closed:
\begin{theorem}\label{curveredo}
    If $X$ is a smooth projective curve over an algebraically closed field $k$ having characteristic unequal to 2, with a rational point $e$, then there is a splitting
    \begin{equation*}
        X_+\sim S^{0,0}\vee \mathbb{J}(X)\vee S^{2,1}
    \end{equation*}
    in the motivic stable homotopy category $\text{SH}(k)$.
\end{theorem}
\begin{proof}
   If $k$ is algebraically closed, by \cite[Section 4]{ASENS_1971_4_4_2_181_0}, there is a canonical theta characteristic on $X$. In other words, we have $c_1(X)\equiv 0 \mod 2$ or $\overline{c_1(X)}=0$. Then by \cite[Theorem 4.4]{10.1093/qmath/hap005}, $\mathcal{D}(X_+)\sim X_+\wedge S^{-2,-1}$ in $\text{SH}(k)$. Moreover, this condition tells us that the motivic Steenrod square $Sq^2$ vanishes on $[\Delta_X]$ and the projector $[X\times e]$, combined with Remark \ref{injectivity}, we know that these two classes can be lifted to $\widetilde{\text{CH}}^1(X\times X)$. Finally,
   we have a decomposition of $[\widetilde{\Delta}_X]\in \widetilde{\text{CH}}^1(X\times X)$ given by
   \begin{equation*}
       [\widetilde{\Delta_X}]=\widetilde{\pi}_0+([\widetilde{\Delta_X}]-\widetilde{\pi}_0-\widetilde{\pi}_2)+\widetilde{\pi}_2
   \end{equation*}
   where projectors $\widetilde{\pi}_0, \widetilde{\pi}_2\in \widetilde{\text{CH}}^1(X\times X)$ map to $[e\times X], [X\times e]$ respectively. Invoke the arguments in Remark \ref{Hurewiczchowwitt}, we will get the corresponding splitting $X_+\sim F_1 \vee F_2\vee F_3$. We can then set $M(F_1)=(X,[e\times X])\cong 1$ and $M(F_3)=(X,[X\times e])\cong L$. So with \cite[Theorem 2.1.1]{bondarko2019infinite} as $k$ is algebraically closed, we have $X_+\sim S^{0,0}\vee \mathbb{J}(X)\vee S^{2,1}$ for some $\mathbb{J}(X)\in\text{SH}(k)$.
\end{proof}

We can also generalize this theorem as:

\begin{theorem}\label{codimension1}
    If $X$ is a smooth variety over an algebraically closed field $k$ having characteristic unequal to 2 of dimension $d\geq 2$, $\mathcal{L}$ is a line bundle on $X$, then $[V]\in \text{CH}^{d-1}(X)$ can be lifted to $\widetilde{\text{CH}}^{d-1}(X, \mathcal{L})$ if and only if $(Sq^2+\overline{c_1}(\mathcal{L})\cup)(\overline{[V]})=0 \mod 2$.
\end{theorem}
\begin{proof}
    The idea is the same as the diagram \ref{curvediagram}. As for the twisted version, we need to use \cite[Theorem 4.1.4]{Secondary}.
\end{proof}

\subsection{The case of Calabi-Yau surfaces}

In the case of abelian surfaces $X$ over an algebraically closed field $k$, Theorem \ref{Shift1}
already tells us there is a splitting $X_+\sim S^{0,0}\vee F\vee S^{4,2}$ in $\text{SH}(k)$, so the goal in this section is to generalize the splitting phenomena to the Calabi-Yau surfaces. 

We first prove a lemma:
\begin{lemma}\label{keylemma}
Over an algebraically closed field $k$, consider 
$Sq^2_{\text{\'et}}: H_{\text{\'et}}^5(X\times X,\mu^{\otimes 3}_2)\to H_{\text{\'et}}^7(X\times X,\mu^{\otimes 4}_2)$. If this morphism is surjective, then in the diagram \ref{surfacediagram}, the morphism $Sq^2:H^{2}_{Nis}(X\times X, K^M_3/2)\to H^{3}_{Nis}(X\times X, I^{4}) \cong H^{3}_{Nis}(X\times X, K^M_4/2)$ is surjective and $\rho$ is injective.
\end{lemma}
\begin{proof}
By the exactness of the vertical B\"ar sequence, $\rho$ is injective if and only if the morphism $H^{2}_{Nis}(X\times X, K^M_3/2)\to H^{3}_{Nis}(X\times X, K^M_4/2)$ is surjective. By \cite[Corollary 4.1.2 and 4.1.3]{Secondary}, the morphism $H^{2}_{Nis}(X\times X, K^M_3/2)\cong H^{5,3}(X, \Z/2)\to H^{3}_{Nis}(X\times X, K^M_4/2)\cong H^{7,4}(X, \Z/2)$ is given by the motivic Steenrod square $Sq^2$. Moreover, one can also define the Steenrod square on \'etale cohomology groups by \cite{Joshua2007COHOMOLOGYOI}, so we have $Sq^2_{\text{\'et}}: H_{\text{\'et}}^5(X\times X,\mu^{\otimes 3}_2)\to H_{\text{\'et}}^7(X\times X,\mu^{\otimes 4}_2)$ and these two Steenrod squares are compatible via cycle map from motivic cohomology to \'etale cohomology \cite[Theorem 2.1]{Joshua2007COHOMOLOGYOI}. Further we can identify the cohomology operation on \'etale cohomology with the induced operations on motivic cohomology after inverting the motivic Bott element \cite[Theorem 1.1]{Levine}. So if $Sq^2_{\text{\'et}}$is surjective, then $Sq^2$ is surjective.
\end{proof}

\begin{corollary}\label{h1vanish}
    If $H_{\text{\'et}}^1(X\times X,\Z/2)=0$, then $\rho$ is injective in the diagram \ref{surfacediagram}.
\end{corollary}
\begin{proof}
    Simply use Poincar\'e duality of \'etale cohomology \cite[Theorem 24]{milne} $\text{Hom}(H_{\text{\'et}}^1(X\times X,\Z/2),\Z/2)\cong H_{\text{\'et}}^7(X\times X,\mu^{\otimes 4}_2)$ and $H_{\text{\'et}}^7(X\times X,\mu^{\otimes 4}_2)\to H^{3}_{Nis}(X\times X, I^{4})$ is surjective by \cite[Proposition 5.2]{threefolds}.
\end{proof}

\begin{theorem}
    If $X$ is a smooth variety over an algebraically closed field $k$ having characteristic unequal to 2 of dimension $d\geq 3$, $\mathcal{L}$ is a line bundle on $X$, and $H_{\text{\'et}}^1(X,\Z/2)=0$, then $[V]\in \text{CH}^{d-2}(X)$ can be lifted to $\widetilde{\text{CH}}^{d-2}(X, \mathcal{L})$ if and only if $(Sq^2+\overline{c_1}(\mathcal{L})\cup)(\overline{[V]})=0 \mod 2$.
\end{theorem}
\begin{proof}
    By the above Corollary \ref{h1vanish}, we know that $\rho$ is injective just as the diagram \ref{surfacediagram}. So the result follows.
\end{proof}

Now if we consider the Calabi-Yau variety, which means a smooth proper variety over a field $k$ with trivial canonical bundle, and we specialize to a product of Calabi-Yau varieties with dimension no less than 2, then the above theorem can be rephrased as:

\begin{theorem}
    If $X$ is a Calabi-Yau variety over an algebraically closed field $k$ having characteristic unequal to 2 of dimension $d\geq 2$, and $H_{\text{\'et}}^1(X\times X,\Z/2)=0$, then $[V]\in \text{CH}^{d-2}(X\times X)$ can be lifted to $\widetilde{\text{CH}}^{d-2}(X\times X)$ if and only if $Sq^2(\overline{[V]})=0 \mod 2$.
\end{theorem}

Similar to Theorem \ref{curveredo}, we can use the above result to understand the stable splitting in $\text{SH}(k)$.

\begin{theorem}\label{K3splitting}
    If $X$ is a Calabi-Yau surface over an algebraically closed field $k$ having characteristic unequal to 2 and $H_{\text{\'et}}^1(X\times X,\Z/2)=0$,
    then there is a splitting
    \begin{equation*}
        X_+\sim S^{0,0}\vee F\vee S^{4,2}
    \end{equation*}
    in the motivic stable homotopy category $\text{SH}(k)$.
\end{theorem}
\begin{proof}
    Fix a rational point $e$, it only suffices to verify $Sq^2(\overline{[X\times e]})$ and $Sq^2(\overline{[e\times X]})=0 \mod 2$. We can use the computation in \cite[Lemma 5.3]{10.1093/qmath/hap005}, where we specialize to a cycle and its normal bundle. In our case, as 
    $X$ is Calabi-Yau, then the first Chern class of the normal bundle of $[X\times e]$ in $X\times X$ is trivial, so $Sq^2(\overline{[X\times e]})$ is given by taking cup product with the first Chern class, which is also trivial. Similarly we can show $Sq^2(\overline{[e\times X]})=0 \mod 2$. By the same reasoning as Theorem \ref{curveredo}, we can get the corresponding splitting.
\end{proof}

For a smooth variety $X$ over $\C$, if the associated complex manifold $X_{h}$ is simply connected, then $X$ will satisfy the above condition by the comparison theorem of \'etale cohomology and complex cohomology \cite[Theorem 21]{milne}. Specifically, a complex K3 surface will have such splitting in $\text{SH}(\C)$.
One can compare this with Theorem \ref{0.1}, \ref{K31} and also
Remark \ref{K3remark}. However, we need to point out that for an abelian variety, it will not satisfy the condition in Theorem \ref{K3splitting}.

We can also get a more general statement in this case (compare with Theorem \ref{codimension1}):
\begin{theorem}
    If $X$ is a smooth variety over an algebraically closed field $k$ having characteristic unequal to 2 of dimension $d\geq 3$, and satisfies the condition that $Sq^2_{\text{\'et}}: H_{\text{\'et}}^{2d-3}(X,\mu^{\otimes (d-1)}_2)\to H_{\text{\'et}}^{2d-1}(X,\mu^{\otimes d}_2)$ is surjective, then $[V]\in \text{CH}^{d-2}(X)$ can be lifted to $\widetilde{\text{CH}}^{d-2}(X)$ if and only if $Sq^2(\overline{[V]})=0 \mod 2$.
\end{theorem}

\begin{remark}\label{specificcycle}
    Meanwhile, if we just want to lift one specific cycle $[V]\in \text{CH}^2(X\times X)$ such that $Sq^2(\overline{[V]})=\Bar{0}$, then by the exactness of the B\"ar sequence, as $Sq^2=\rho\circ \beta$, this condition will induce a map $\psi: ker(Sq^2)\to H^{7,4}(X\times X, \Z/2)/Sq^2(H^{5,3}(X\times X, \Z/2))$ and $\beta=\eta\circ\psi$. So $[V]$ belongs to $ker(\partial)$ (i.e. can be lifted to $\widetilde{\text{CH}}^{2}(X\times X)$) if and only if $\psi(\overline{[V]})\in Im(Sq^2)$.
    
    Also in Lemma \ref{keylemma}, by the comparison with \'etale cohomology, we can extend the latter condition to $\psi(\overline{[V]})\in Im(Sq_{\text{\'et}}^2)$.
\end{remark}

\begin{remark}\label{complexjacobian}
     If $X$ is an abelian surface over $\C$, then it is a complex torus $\C^2/\Lambda$. In specific cases,
    when the reduced automorphism group (i.e. the automorphism group modulo the hyperelliptic involution) of $C$ is isomorphic to $D_3$, $D_6$, $\Sigma_4$, the Jacobian is isomorphic to a product of elliptic curves \cite[Section 10.8 (12),Section 11.7]{birkenhake2004complex}, so it will automatically split in $\text{SH}(k)$ \cite[88]{Mor10}.
\end{remark}

\bibliographystyle{alphaurl} 
\bibliography{Reference}

\end{document}